\chardef\coloryes=0     \chardef\isitdraft=0     \ifnum\isitdraft=1    https://preview.overleaf.com/public/tfqhbhwccrwh/images/4f4289976560863c3073d36f0bc870df24fe1ade.jpeg \textwidth 16truecm \textheight 8.4in\oddsidemargin0.2truecm\evensidemargin0.7truecm\voffset-.9truecm         \def\eqref#1{({\ref{#1}})}                 \def\startnewsection#1#2{\section{#1}\label{#2}\setcounter{equation}{0}}    \textwidth 16truecm \textheight 8.4in\oddsidemargin0.2truecm\evensidemargin0.7truecm\voffset-.9truecm \def\nnewpage{}  \else    \def\startnewsection#1#2{\section{#1}\label{#2}\setcounter{equation}{0}}    \textwidth 16truecm \textheight 8.4in\oddsidemargin0.2truecm\evensidemargin0.7truecm\voffset-.9truecm \def\nnewpage{}  \fi    
\begin{document} \nocite{*} \def\ques{{\colr \underline{??????}\colb}} \def\nto#1{{\colC \footnote{\em \colC #1}}} \def\fractext#1#2{{#1}/{#2}} \def\fracsm#1#2{{\textstyle{\frac{#1}{#2}}}}    \def\nnonumber{} \def\les{\lesssim} \def\plusdelta{+\delta} \def\colr{{}}    \def\colg{{}}    \def\colb{{}}    \def\colu{{}}    \def\cole{{}}    \def\colA{{}}    \def\colB{{}}    \def\colC{{}}    \def\colD{{}}    \def\colE{{}}    \def\colF{{}}    \ifnum\coloryes=1    \definecolor{coloraaaa}{rgb}{0.1,0.2,0.8}    \definecolor{colorbbbb}{rgb}{0.1,0.7,0.1}    \definecolor{colorcccc}{rgb}{0.8,0.3,0.9}    \definecolor{colordddd}{rgb}{0.0,.5,0.0}    \definecolor{coloreeee}{rgb}{0.8,0.3,0.9}    \definecolor{colorffff}{rgb}{0.8,0.3,0.9}    \definecolor{colorgggg}{rgb}{0.5,0.0,0.4}    \def\colb{\color{black}}    \def\colr{\color{red}}    \def\cole{\color{colorgggg}}    \def\colu{\color{blue}}    \def\colg{\color{colordddd}}    \def\colA{\color{coloraaaa}}    \def\colB{\color{colorbbbb}}    \def\colC{\color{colorcccc}}    \def\colD{\color{colordddd}}    \def\colE{\color{coloreeee}}    \def\colF{\color{colorffff}}    \def\colG{\color{colorgggg}}    \fi    \ifnum\isitdraft=1    \chardef\coloryes=1     \baselineskip=17pt    \pagestyle{myheadings} \reversemarginpar \def\const{\mathop{\rm const}\nolimits}   \def\diam{\mathop{\rm diam}\nolimits}     \def\rref#1{{\ref{#1}{\rm \tiny \fbox{\tiny #1}}}} \def\theequation{\fbox{\bf \thesection.\arabic{equation}}} \def\plusdelta{+\delta} \def\startnewsection#1#2{\newpage\colg \section{#1}\colb\label{#2} \setcounter{equation}{0} \pagestyle{fancy} \lhead{\colb Section~\ref{#2}, #1 } \cfoot{} \rfoot{\thepage\ of \pageref{LastPage}} \lfoot{\colb{\today,~\currenttime}~(kw1)}} \chead{} \rhead{\thepage} \def\nnewpage{\newpage} \newcounter{startcurrpage} \newcounter{currpage} \def\llll#1{{\rm\tiny\fbox{#1}}} \def\blackdot{{\color{red}{\hskip-.0truecm\rule[-1mm]{4mm}{4mm}\hskip.2truecm}}\hskip-.3truecm}    \def\bluedot{{\colC {\hskip-.0truecm\rule[-1mm]{4mm}{4mm}\hskip.2truecm}}\hskip-.3truecm}    \def\purpledot{{\colA{\rule[0mm]{4mm}{4mm}}\colb}}    \def\pdot{\purpledot}    \else      \baselineskip=15pt \def\blackdot{{\rule[-3mm]{8mm}{8mm}}}    \def\blackdot{{\color{red}{\hskip-.0truecm\rule[-1mm]{4mm}{4mm}\hskip.2truecm}}\hskip-.3truecm}    \def\purpledot{{\rule[-3mm]{8mm}{8mm}}}    \def\pdot{} \fi    \def\SS{S} \def\bSS{\bar S} \def\qq{{\bar q}} \def\KK{K} \def\ema#1{\underline{\underline{#1}}} \def\emb#1{\dotuline{#1}} \ifnum\isitdraft=1 \def\llabel#1{\marginnote{\small\colb\hskip1.5truecm\boxed{#1}}[-0.7cm]\nonumber} \else \def\llabel#1{\nonumber} \fi \def\tepsilon{\tilde\epsilon} \def\restr{\bigm|} \def\into{\int_{\Omega}} \def\intu{\int_{\Gamma_1}} \def\intl{\int_{\Gamma_0}} \def\tpar{\tilde\partial} \def\bpar{\,|\nabla_2|} \def\barpar{\bar\partial} \def\FF{F} \def\gdot{{\color{green}{\hskip-.0truecm\rule[-1mm]{4mm}{4mm}\hskip.2truecm}}\hskip-.3truecm}    \def\bdot{{\color{blue}{\hskip-.0truecm\rule[-1mm]{4mm}{4mm}\hskip.2truecm}}\hskip-.3truecm}    \def\cydot{{\color{cyan} {\hskip-.0truecm\rule[-1mm]{4mm}{4mm}\hskip.2truecm}}\hskip-.3truecm}    \def\rdot{{\color{red} {\hskip-.0truecm\rule[-1mm]{4mm}{4mm}\hskip.2truecm}}\hskip-.3truecm}    \def\tdot{\fbox{\fbox{\bf\color{blue}\tiny I'm here; \today \ \currenttime}}} \def\nts#1{{\color{red}\hbox{\bf ~#1~}}}  \def\ntsr#1{\vskip.0truecm{\color{red}\hbox{\bf ~#1~}}\vskip0truecm}  \def\ntsf#1{\footnote{\hbox{\bf ~#1~}}}  \def\ntsf#1{\footnote{\color{blue}\hbox{\bf ~#1~}}}  \def\bigline#1{~\\\hskip2truecm~~~~{#1}{#1}{#1}{#1}{#1}{#1}{#1}{#1}{#1}{#1}{#1}{#1}{#1}{#1}{#1}{#1}{#1}{#1}{#1}{#1}{#1}\\}    \def\biglineb{\bigline{$\downarrow\,$ $\downarrow\,$}}    \def\biglinem{\bigline{---}}    \def\biglinee{\bigline{$\uparrow\,$ $\uparrow\,$}}    \def\ceil#1{\lceil #1 \rceil} \def\gdot{{\color{green}{\hskip-.0truecm\rule[-1mm]{4mm}{4mm}\hskip.2truecm}}\hskip-.3truecm}    \def\bluedot{{\color{blue} {\hskip-.0truecm\rule[-1mm]{4mm}{4mm}\hskip.2truecm}}\hskip-.3truecm}    \def\rdot{{\color{red} {\hskip-.0truecm\rule[-1mm]{4mm}{4mm}\hskip.2truecm}}\hskip-.3truecm}    \def\dbar{\bar{\partial}} \newtheorem{Theorem}{Theorem}[section] \newtheorem{Corollary}[Theorem]{Corollary} \newtheorem{Proposition}[Theorem]{Proposition} \newtheorem{Lemma}[Theorem]{Lemma} \theoremstyle{remark} \newtheorem{Remark}[Theorem]{Remark} \newtheorem{definition}{Definition}[section] \def\theequation{\thesection.\arabic{equation}} \def\sqrtg{\sqrt{g}} \def\dd{\delta} \def\EE{{\mathcal E}} \def\lot{{\rm l.o.t.}}                        \def\inon#1{~~~\hbox{#1}}                 \def\endproof{\hfill$\Box$\\} \def\square{\hfill$\Box$\\} \def\inon#1{~~~\hbox{#1}}                 \def\comma{ {\rm ,\qquad{}} }             \def\commaone{ {\rm ,\qquad{}} }          \def\dist{\mathop{\rm dist}\nolimits}     \def\sgn{\mathop{\rm sgn\,}\nolimits}     \def\Tr{\mathop{\rm Tr}\nolimits}     \def\dive{\mathop{\rm div}\nolimits}     \def\grad{\mathop{\rm grad}\nolimits}    \def\curl{\mathop{\rm curl}\nolimits}    \def\det{\mathop{\rm det}\nolimits}     \def\supp{\mathop{\rm supp}\nolimits}   \def\re{\mathop{\rm {\mathbb R}e}\nolimits} \def\wb{\bar{\omega}} \def\Wb{\bar{W}} \def\indeq{\qquad{}}                      \def\indeqtimes{\indeq\indeq\times}  \def\period{.}                            \def\semicolon{\,;}                       \newcommand{\cD}{\mathcal{D}} \newcommand{\cH}{\mathcal{H}} \newcommand{\al}{\alpha} \newcommand{\be}{\beta} \newcommand{\ga}{\gamma} \newcommand{\de}{\delta} \newcommand{\ep}{\epsilon} \newcommand{\si}{\sigma} \newcommand{\Si}{\Sigma} \newcommand{\vfi}{\varphi} \newcommand{\om}{\omega} \newcommand{\Om}{\Omega} \newcommand{\cqd}{\hfill $\qed$\\ \medskip} \newcommand{\rar}{\rightarrow} \newcommand{\imp}{\Rightarrow} \newcommand{\tr}{\operatorname{tr}}  
\def\bal#1{\heyu{#1}} \def\lla#1{} \newcommand{\vol}{\operatorname{vol}} \newcommand{\id}{\operatorname{id}} \newcommand{\p}{\parallel} \newcommand{\norm}[1]{\Vert#1\Vert} \newcommand{\abs}[1]{\vert#1\vert} \def\heyu#1{\lla{#1}} \newcommand{\nnorm}[1]{\left\Vert#1\right\Vert} \newcommand{\aabs}[1]{\left\vert#1\right\vert} \title{Global Sobolev persistence for the fractional Boussinesq equations with zero diffusivity} \author{Igor Kukavica and Weinan Wang}  \date{} \maketitle \bigskip \indent Department of Mathematics\\ \indent University of Southern California\\ \indent Los Angeles, CA 90089\\ \indent e-mails: kukavica@usc.edu, wangwein@usc.edu \bigskip \begin{abstract} We address the persistence of regularity for the 2D $\alpha$-fractional Boussinesq equations with positive viscosity and zero diffusivity in general Sobolev spaces, i.e., for $(u_{0}, \rho_{0}) \in W^{s,q}(\mathbb R^2) \times W^{s,q}(\mathbb R^2)$, where $s> 1$ and $q \in (2, \infty)$. We prove that the solution $(u(t), \rho(t))$ exists and belongs to $W^{s,q}(\mathbb R^2) \times W^{s,q}(\mathbb R^2)$  for all positive time $t$ for $q>2$, where $\alpha\in(1,2)$ is arbitrary. \end{abstract} \startnewsection{Introduction}{sec01}  In this paper, we address the persistence of regularity for the 2D fractional Boussinesq equations   \begin{align}    & u_t     + \Lambda^{\alpha}u     + u\cdot \nabla u     + \nabla \pi     = \rho e_2     \llabel{EQ55}    \\     & \rho_t + u\cdot \nabla \rho = 0    \llabel{EQ61}    \\    & \nabla \cdot u=0    \llabel{EQ67}   \end{align}  in Sobolev spaces.  Here, $u$ is the velocity satisfying the 2D Navier-Stokes equations \cite{CF, DG, FMT, R, T2, T3}  driven by $\rho$, which represents the density or temperature of the fluid, depending on the physical context. Also, $e_2 = (0, 1)$ is the unit vector in the vertical direction and $1<\alpha<2$.  \par\heyu{ 3gBo 5aBKK5 gf J SmN eCW wOM t9 xutz wDkX IY7nNh Wd D ppZ UOq 2Ae 0a W7A6 XoIc TSLNDZ yf 2 XjB cUw eQT Zt cuXI DYsD hdAu3V MB B BKW IcF NWQ dO u3Fb c6F8 VN77Da IH E 3MZ luL YvB mN Z2wE auXX DGpeKR nw o UVB 2oM} The global existence and persistence of regularity has been a topic of high interest since the seminal works of  Chae \cite{C} and of Hou and Li \cite{HL}, who proved the global existence of a unique solution in the case of Laplacian, $\alpha=2$. Namely, the global persistence holds for $(u_0,\rho_0)$ in $H^{s}\times H^{s-1}$ for integers $s\ge3$ \cite{HL},  while  we have the global persistence in  $H^{s}\times H^{s}$ for integers $s\ge3$ by \cite{C}. The global existence and uniqueness in the low regularity space $H^{1}\times L^2$ was established by Lunasin~et~al in \cite{LLT}. The persistence in $H^{s}\times H^{s-1}$ for the intermediate values $1< s< 3$ was then settled in \cite{HKZ1, HKZ2}. For other results on the global existence and persistence of solutions, cf.~\cite{ACW, BS,  BrS, CD, CG, CLR, CN, CW, DP1, DP2, ES, HK1, HK2, HS, JMWZ, KTW, KWZ, LPZ, SW, T1}. \par\heyu{ VVe hW 0ejG gbgz Iw9FwQ hN Y rFI 4pT lqr Wn Xzz2 qBba lv3snl 2j a vzU Snc pwh cG J0Di 3Lr3 rs6F23 6o b LtD vN9 KqA pO uold 3sec xqgSQN ZN f w5t BGX Pdv W0 k6G4 Byh9 V3IicO nR 2 obf x3j rwt 37 u82f wxwj SmOQq0 pq} The main difficulty when studying the persistence of regularity in the Sobolev spaces $W^{s,q}\times W^{s-1,q}$ when $q>2$ is the lack of availability of the energy equation, which is one of the essential features of the Boussinesq system. This problem was studied in~\cite{KWZ}, where it was proven that the persistence holds if $(s-1)q > 2$.  \par\heyu{ 4 qfv rN4 kFW hP HRmy lxBx 1zCUhs DN Y INv Ldt VDG 35 kTMT 0ChP EdjSG4 rW N 6v5 IIM TVB 5y cWuY OoU6 Sevyec OT f ZJv BjS ZZk M6 8vq4 NOpj X0oQ7r vM v myK ftb ioR l5 c4ID 72iF H0VbQz hj H U5Z 9EV MX8 1P GJss} In the present paper, we consider the fractional dissipation in the range $1<\alpha < 2$, addressing the persistence  in $W^{s,q}({\mathbb R}^2)\times W^{s,q}({\mathbb R}^2)$. Namely, we prove that if $(u_0,\rho_0)\in W^{s,q}({\mathbb R}^2)\times W^{s,q}({\mathbb R}^2)$, then  $(u(\cdot,t),\rho(\cdot,t))\in W^{s,q}({\mathbb R}^2)\times W^{s,q}({\mathbb R}^2)$ for all $t\ge 0$. The main result is contained in Theorem~\ref{T01} and asserts the global persistence for all $s>1$. The main device in the proof is the generalized vorticity   \begin{equation}    \zeta=\omega-\partial_{1}(I-\Delta)^{-\alpha/2}\rho       .    \label{EQ103}   \end{equation} This change of variable is inspired by the one introduced by  Jiu et al in \cite{JMWZ}, (cf.~also \cite{SW}), which in turn drew from the work of Hmidi, Keraani, and Rousset \cite{HKR}. Here we need to modify it to avoid problems with low frequencies as our data are not square integrable. We show in \eqref{EQ08} below that the modified vorticity $\zeta$  defined in \eqref{EQ103} satisfies the equation   \begin{align}   \begin{split}   \zeta_{t}   +   u\cdot \nabla \zeta   +   \Lambda^{\alpha} \zeta   =   [\SS, u\cdot \nabla]\rho    -   (\tilde \Lambda^{-\alpha} \Lambda^{\alpha} - I)\partial_1 \rho   \end{split}    \label{EQ133}   \end{align} where $   \SS   = \partial_{1}(I-\Delta)^{-\alpha/2} $ with $ \Lambda=(-\Delta)^{1/2}$ and $\tilde \Lambda=(I-\Delta)^{1/2}$. Compared to the original change of variable in  \cite{JMWZ}, we obtain a new term  $N \rho=  (\tilde \Lambda^{-\alpha} \Lambda^{\alpha} - I)\partial_1 \rho$, for which however we show  in Lemma~\ref{L01} below that it is smoothing of degree 1. The reason why this change of variable is suitable for low frequencies is due to the inhomogeneity in the  second term of \eqref{EQ103}. \par\heyu{ Wedm hBXKDA iq w UJV Gj2 rIS 92 AntB n1QP R3tTJr Z1 e lVo iKU stz A8 fCCg Mwfw 4jKbDb er B Rt6 T8O Zyn NO qXc5 3Pgf LK9oKe 1p P rYB BZY uui Cw XzA6 kaGb twGpmR Tm K viw HEz Rjh Te frip vLAX k3PkLN Dg 5 odc omQ} Also, an important part of the proof of Sobolev persistence is based on the observation that a fractional derivative of the commutator term in \eqref{EQ133} is a sum of two terms, which are also  of commutator type and are thus suitable for the use of a Kato-Ponce type inequality; cf.~\eqref{EQ53}  and Remark~\ref{R01} below. \par\heyu{ j9L YI VawV mLpK rto0F6 Ns 7 Mmk cTL 9Tr 8f OT4u NNJv ZThOQw CO C RBH RTx hSB Na Iizz bKIB EcWSMY Eh D kRt PWG KtU mo 26ac LbBn I4t2P1 1e R iPP 99n j4q Q3 62UN AQaH JPPY1O gL h N8s ta9 eJz Pg mE4z QgB0 mlAWBa 4E} \par\heyu{ m u7m nfY gbN Lz ddGp hhJV 9hyAOG CN j xJ8 3Hg 6CA UT nusW 9pQr Wv1DfV lG n WxM Bbe 9Ww Lt OdwD ERml xJ8LTq KW T tsR 0cD XAf hR X1zX lAUu wzqnO2 o7 r toi SMr OKL Cq joq1 tUGG iIxusp oi i tja NRn gtx S0 r98r} \par\heyu{ wXF7 GNiepz Ef A O2s Ykt Idg H1 AGcR rd2w 89xoOK yN n LaL RU0 3su U3 JbS8 dok8 tw9NQS Y4 j XY6 25K CcP Ly FRlS p759 DeVbY5 b6 9 jYO mdf b99 j1 5lvL vjsk K2gEwl Rx O tWL ytZ J1y Z5 Pit3 5SOi ivz4F8 tq M JIg QQi} \par\heyu{ Oob Sp eprt 2vBV qhvzkL lf 7 HXA 4so MXj Wd MS7L eRDi ktUifL JH u kes trv rl7 mY cSOB 7nKW MD0xBq kb x FgT TNI wey VI G6Uy 3dL0 C3MzFx sB E 7zU hSe tBQ cX 7jn2 2rr0 yL1Erb pL R m3i da5 MdP ic dnMO iZCy Gd2MdK Ub} \par\heyu{ x saI 9Tt nHX qA QBju N5I4 Q6zz4d SW Y Urh xTC uBg BU T992 uczE mkqK1o uC a HJB R0Q nv1 ar tFie kBu4 9ND9kK 9e K BOg PGz qfK J6 7NsK z3By wIwYxE oW Y f6A Kuy VPj 8B 9D6q uBkF CsKHUD Ck s DYK 3vs 0Ep 3g M2Ew} The paper is organized as follows.  In Section~\ref{sec02}, we state the main theorem on the persistence and introduce the change of the vorticity variable. We also prove the smoothing property of the operator $N$. The next section contains a variant of a Kato-Ponce lemma suitable for the operator  $\SS$ arising in \eqref{EQ133}. Lemma~\ref{L06} contains the bound for the vorticity and its modified version~$\zeta$. The proof of the main theorem for the case $s\leq\alpha $ is then provided in Section~\ref{sec04}. Finally, the last section contains the proof of the main theorem for $s>\alpha$. This part of the proof requires the case $s\leq \alpha$ when we establish a bound on $\Vert \Lambda^{1/2}u \Vert_{L^\infty}$ in \eqref{EQ108} below. \par\heyu{ lPGj RVX6cx lb V OfA ll7 g6y L9 PWyo 58h0 e07HO0 qz 8 kbe 85Z BVC YO KxNN La4a FZ7mw7 mo A CU1 q1l pfm E5 qXTA 0QqV MnRsbK zH o 5vX 1tp MVZ XC znmS OM73 CRHwQP Tl v VN7 lKX I06 KT 6MTj O3Yb 87pgoz ox y dVJ HPL} \par\heyu{ 3k2 KR yx3b 0yPB sJmNjE TP J i4k m2f xMh 35 MtRo irNE 9bU7lM o4 b nj9 GgY A6v sE sONR tNmD FJej96 ST n 3lJ U2u 16o TE Xogv Mqwh D0BKr1 Ci s VYb A2w kfX 0n 4hD5 Lbr8 l7Erfu N8 O cUj qeq zCC yx 6hPA yMrL eB8Cwl kT} \par\heyu{ h ixd Izv iEW uw I8qK a0VZ EqOroD UP G phf IOF SKZ 3i cda7 Vh3y wUSzkk W8 S fU1 yHN 0A1 4z nyPU Ll6h pzlkq7 SK N aFq g9Y hj2 hJ 3pWS mi9X gjapmM Z6 H V8y jig pSN lI 9T8e Lhc1 eRRgZ8 85 e NJ8 w3s ecl 5i lCdo} \par\heyu{ zV1B oOIk9g DZ N Y5q gVQ cFe TD VxhP mwPh EU41Lq 35 g CzP tc2 oPu gV KOp5 Gsf7 DFBlek to b d2y uDt ElX xm j1us DJJ6 hj0HBV Fa n Tva bFA VwM 51 nUH6 0GvT 9fAjTO 4M Q VzN NAQ iwS lS xf2p Q8qv tdjnvu pL A TIw ym4} \startnewsection{Notation and the main result on global persistence }{sec02} We consider solutions of the Boussinesq system   \begin{align}    & u_t     + \Lambda^{\alpha}u     + u\cdot \nabla u     + \nabla \pi     = \rho e_2    \label{EQ01}   \\    & \rho_t + u\cdot \nabla \rho = 0   \label{EQ02}   \\   & \nabla \cdot u=0   ,   \label{EQ03}   \end{align}  where the operator $\Lambda^{\alpha}$ is defined by   \begin{equation*}    \Lambda^{\alpha} = (-\Delta)^{\alpha/2}    \comma 1<\alpha<2    ,   \end{equation*} or, using the Fourier transform,   \begin{equation}    (\Lambda^\alpha f) \hat~(\xi) = |\xi|^{\alpha} \hat f(\xi)    \comma \xi\in{\mathbb R}^2       .   \label{EQ04}   \end{equation} The following is the main result of the paper. \par\heyu{ nEY ES fMav UgZo yehtoe 9R T N15 EI1 aKJ SC nr4M jiYh B0A7vn SA Y nZ1 cXO I1V 7y ja0R 9jCT wxMUiM I5 l 2sT XnN RnV i1 KczL G3Mg JoEktl Ko U 13t saq jrH YV zfb1 yyxu npbRA5 6b r W45 Iqh fKo 0z j04I cGrH irwyH2 tJ} \cole \begin{Theorem} \label{T01} Let $q \in (2,\infty)$ and $s>1$. Assume that  $\Vert u_0 \Vert_{W^{s,q}} < \infty$ with  $\nabla \cdot u_0 = 0$ and $\Vert \rho_0 \Vert_{W^{s,q}} < \infty$.  Then there exists a unique solution $(u, \rho)$ to the equations \eqref{EQ01}--\eqref{EQ03} such that  $u \in C\left([0,T], W^{s,q}(\mathbb R^2)\right)$ and $\rho \in C\left([0,T], W^{s,q}(\mathbb R^2)\right)$ for all $T>0$. \end{Theorem}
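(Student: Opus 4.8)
The plan is to run a standard a priori estimate scheme on the modified vorticity $\zeta$ defined in \eqref{EQ103}, bootstrapping from low to high regularity, with the crucial gain coming from the commutator structure of the right-hand side of \eqref{EQ133}. The argument splits into the two regimes announced in the introduction: $1<s\le\alpha$ and $s>\alpha$.

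First I would establish the equation \eqref{EQ133} for $\zeta$ by applying the operator $\SS=\partial_1(I-\Delta)^{-\alpha/2}$ to the density equation \eqref{EQ02}, subtracting from the vorticity equation $\omega_t+u\cdot\nabla\omega+\Lambda^\alpha\omega=\partial_1\rho$, and carefully tracking the mismatch between $\Lambda^\alpha\SS\rho$ and $\SS\Lambda^\alpha\rho$; this is where the smoothing term $N\rho=(\tilde\Lambda^{-\alpha}\Lambda^\alpha-I)\partial_1\rho$ is produced, and Lemma~\ref{L01} tells us $N$ is order $-1$, hence harmless at every level of regularity. The transport equation \eqref{EQ02} gives, by the divergence-free condition and the $L^q$ maximal-function bound for transport equations (or simply $\|\rho(t)\|_{L^q}=\|\rho_0\|_{L^q}$ together with the $W^{s,q}$ propagation along the flow controlled by $\int_0^t\|\nabla u\|_{L^\infty}$), control of $\|\rho(t)\|_{W^{s,q}}$ in terms of $\int_0^t\|\nabla u\|_{L^\infty}\,d\tau$. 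So everything reduces to closing a Grönwall loop for $\|\zeta(t)\|_{W^{\sigma,q}}$ (with $\sigma=s-1$) plus a bound for $\|\nabla u\|_{L^\infty}$, noting that $u$ is recovered from $\zeta$ via Biot–Savart up to the smoothing correction $\partial_1(I-\Delta)^{-\alpha/2}\rho$, which is a gain of $\alpha-1>0$ derivatives on $\rho$.

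For the base case $s\le\alpha$: test the $\zeta$-equation against $|\zeta|^{q-2}\zeta$ after applying $\Lambda^{s-1}$ (or use the Córdoba–Córdoba inequality to handle the fractional dissipation positively), so that the dissipative term $\Lambda^\alpha\zeta$ contributes a good sign and the transport term integrates away. The right-hand side contributions are: $N\rho$, bounded in $W^{s-1,q}$ by $\|\rho\|_{W^{s-\alpha,q}}\le\|\rho_0\|_{W^{s,q}}$ using the order $-1$ smoothing; and the commutator $[\SS,u\cdot\nabla]\rho$, which by the Kato–Ponce-type Lemma of the next section (the one tailored to $\SS$) is bounded by $\|\nabla u\|_{L^\infty}\|\rho\|_{W^{s-1,q}}+\|u\|_{W^{s,q}}\|\rho\|_{L^\infty}$ or a similar product, all of which are controlled by $\int_0^t\|\nabla u\|_{L^\infty}$. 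Finally $\|\nabla u\|_{L^\infty}$ is dominated by $\|u\|_{W^{s,q}}$ via Sobolev embedding since $(s-1)q>q>2$ — wait, this needs $s-1>2/q$, which for $s>1$ and $q>2$ is not automatic; instead one uses $\|\nabla u\|_{L^\infty}\lesssim\|\omega\|_{L^q}^{\theta}\|\omega\|_{W^{s-1,q}}^{1-\theta}$-type logarithmic/interpolation bounds, or more simply a Beale–Kato–Majda type estimate $\|\nabla u\|_{L^\infty}\lesssim 1+\|\omega\|_{L^q}+\|\omega\|_{W^{s-1,q}}\log(\ldots)$, closing via Grönwall since the density forcing only enters through fixed norms of $\rho_0$. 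I expect the main obstacle here is precisely this $L^\infty$-control of $\nabla u$ without an $L^2$ energy estimate: the paper flags that $\|\Lambda^{1/2}u\|_{L^\infty}$ must be bounded in \eqref{EQ108}, suggesting the true route is a bound on a half-derivative of $u$ in $L^\infty$ obtained from the $s\le\alpha$ case, then fed into the commutator estimates.

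For $s>\alpha$: here the dissipation no longer directly controls $s-1$ derivatives of $\zeta$ in one step, so I would iterate — first obtain $\zeta\in W^{\alpha-1+\epsilon,q}$ from the base case (applied at regularity $\alpha$), extract from it the bound on $\|\Lambda^{1/2}u\|_{L^\infty}$ referenced in \eqref{EQ108}, and then run the high-regularity estimate: apply $\Lambda^{s-1}$ to \eqref{EQ133}, pair with $|\Lambda^{s-1}\zeta|^{q-2}\Lambda^{s-1}\zeta$, and use the observation highlighted in the introduction that $\Lambda^{s-1}$ of the commutator $[\SS,u\cdot\nabla]\rho$ decomposes (via $\Lambda^{s-1}\SS=\SS\Lambda^{s-1}$ plus lower-order terms, and commutator-of-commutator identities) into two further commutator terms, each amenable to Kato–Ponce with one factor being $\|\nabla u\|_{L^\infty}$ or $\|\Lambda^{1/2}u\|_{L^\infty}$ (already controlled) and the other a top-order norm of $\rho$ or $\zeta$. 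The $N\rho$ term again costs nothing. Combining, one gets $\frac{d}{dt}\|\zeta\|_{W^{s-1,q}}\lesssim (\|\nabla u\|_{L^\infty}+1)\|\zeta\|_{W^{s-1,q}}+\|\rho_0\|_{W^{s,q}}$, and since $\|\rho(t)\|_{W^{s,q}}$ and $\|\nabla u\|_{L^\infty}$ are already under control from the lower-regularity stage, Grönwall closes the estimate on $[0,T]$ for every $T$. Uniqueness follows from a standard energy estimate on the difference of two solutions in a lower-regularity space (e.g. $L^2\times L^2$ or $L^q\times L^q$ after suitable mollification), and the continuity-in-time $u,\rho\in C([0,T],W^{s,q})$ from the a priori bounds plus a density/approximation argument and Bona–Smith-type regularization. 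The genuinely delicate point throughout is keeping every commutator term in a form where at least one factor is an $L^\infty$ norm of (a fractional derivative of) $u$ that has already been bounded — this is the role of the two-commutator decomposition and the reason the $s>\alpha$ case must invoke the $s\le\alpha$ case.
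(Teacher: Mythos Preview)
Your overall architecture matches the paper's: the modified vorticity $\zeta$, the equation \eqref{EQ133}, Lemma~\ref{L01} for $N$, the C\'ordoba--C\'ordoba positivity, the two-commutator decomposition \eqref{EQ53}, and the split $s\le\alpha$ versus $s>\alpha$ are all exactly what the paper does. But there is a genuine gap in your $s>\alpha$ argument.

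You write that after the lower-regularity stage ``$\|\rho(t)\|_{W^{s,q}}$ and $\|\nabla u\|_{L^\infty}$ are already under control,'' and you collapse the $\zeta$-inequality to $\frac{d}{dt}\|\zeta\|_{W^{s-1,q}}\lesssim (\|\nabla u\|_{L^\infty}+1)\|\zeta\|_{W^{s-1,q}}+\|\rho_0\|_{W^{s,q}}$. The second claim is false: only $\|\rho(t)\|_{L^{\bar q}}$ is conserved, not any positive-order Sobolev norm, and the $s\le\alpha$ stage gives at best $\|\rho(t)\|_{W^{\alpha,q}}$, not $\|\rho(t)\|_{W^{s,q}}$ for $s>\alpha$. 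Concretely, when you apply the Kato--Ponce estimate to $[\Lambda^{s-1}\SS\partial_j,u_j]\rho$ and $[\Lambda^{s-1}\partial_j,u_j]\SS\rho$, the $\rho$-factors that appear are $\|\Lambda^{s-\alpha}\rho\|_{L^{r_2}}$ (see \eqref{EQ92}--\eqref{EQ93} in the paper), which for $s>\alpha$ genuinely carries a positive power of $\|\Lambda^s\rho(t)\|_{L^q}$. Conversely, the Kato--Ponce estimate on $[\Lambda^s,u\cdot\nabla]\rho$ in the $\rho$-equation produces $\|\Lambda^s u\|_{L^{s_1}}\sim\|\Lambda^{s-1}\omega\|_{L^{s_1}}$, which is not available from the lower stage either. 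So the two estimates are \emph{coupled}: the paper sets $X=\|\Lambda^{s-1}\zeta\|_{L^q}^q$, $Y=(\|\Lambda^s\rho\|_{L^q}+1)^{q/\gamma}$, $Z=\|\Lambda^{\alpha/2}(|\Lambda^{s-1}\zeta|^{q/2})\|_{L^2}^2$, derives a joint differential inequality \eqref{EQ129}--\eqref{EQ128}, and then must choose $\gamma$ and the H\"older exponents so that all six exponent conditions \eqref{EQ130} close simultaneously. This bookkeeping is the actual content of Section~\ref{sec05}; your sketch skips it by asserting a decoupling that does not hold.

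A secondary point: for $s\le\alpha$ you hedge on how to obtain $\|\nabla u\|_{L^\infty}$, suggesting a BKM-type logarithmic bound. That route may work, but the paper avoids logarithms entirely: Lemma~\ref{L06} bootstraps $\|\omega\|_{L^{\bar q}}\lesssim e^{Ct}$ for \emph{every} $\bar q<\infty$ (using the dissipative gain \eqref{EQ31} iteratively), and then $\|\Lambda u\|_{L^\infty}\lesssim\|\omega\|_{L^{\bar q}}^{1-\mu}\|\Lambda^{s-1}\omega\|_{L^{\bar q}}^{\mu}$ for $\bar q$ large closes directly. This same $L^{\bar q}$ bootstrap is what feeds into \eqref{EQ108} in the $s>\alpha$ case (it bounds $\|\Lambda u\|_{L^\infty}$, not $\|\Lambda^{1/2}u\|_{L^\infty}$ as the introduction's wording suggests). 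Your proposal never isolates this step, and without it the $\|\nabla u\|_{L^\infty}$ control you repeatedly invoke has no source.
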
 \colb \par\heyu{ b Fr3 leR dcp st vXe2 yJle kGVFCe 2a D 4XP OuI mtV oa zCKO 3uRI m2KFjt m5 R GWC vko zi7 5Y WNsb hORn xzRzw9 9T r Fhj hKb fqL Ab e2v5 n9mD 2VpNzl Mn n toi FZB 2Zj XB hhsK 8K6c GiSbRk kw f WeY JXd RBB xy qjEV} Applying the curl operator to \eqref{EQ01}, we obtain the vorticity equation   \begin{align}   \begin{split}   \omega_t + \Lambda^{\alpha}\omega + u \cdot \nabla \omega = \partial_{1} \rho   .   \end{split}   \label{EQ05}   \end{align} Define $\tilde \Lambda = (I - \Delta)^{1/2}$ and set  \begin{equation} \zeta= \omega -  \SS\rho , \llabel{EQ06} \end{equation} where   \begin{equation}   \SS   = \partial_{1}\tilde \Lambda^{-\alpha}   = \partial_{1}(I-\Delta)^{-\alpha/2}   .   \llabel{EQ07}   \end{equation} The equation satisfied by $\zeta$ is obtained by replacing $\omega$ with $\zeta+\SS\rho$  in \eqref{EQ05} and combining the resulting equation with \eqref{EQ02}. We get   \begin{align}   \begin{split}   \zeta_{t}   +   \Lambda^{\alpha} \zeta   +   u\cdot \nabla \zeta   &=   -\SS \rho_t    -   u\cdot \nabla \SS \rho   -   \Lambda^{\alpha} \SS \rho   +   \partial_1 \rho\\   &=   [\SS, u\cdot \nabla]\rho    -   (\tilde \Lambda^{-\alpha} \Lambda^{\alpha} - I)\partial_1 \rho   .   \end{split}   \label{EQ08}   \end{align} Therefore, the equation for the generalized vorticity $\zeta$ reads   \begin{align}   \begin{split}   \zeta_{t}   +   \Lambda^{\alpha} \zeta   +   u\cdot \nabla \zeta   =   [\SS, u\cdot \nabla]\rho    -   N \rho   ,   \end{split}   \label{EQ10}   \end{align} where we set    \begin{equation}   N = (\tilde \Lambda^{-\alpha} \Lambda^{\alpha} - I)\partial_1   .   \label{EQ09}   \end{equation} The operator $N$ is a Fourier multiplier with the symbol    \begin{align}   m(\xi) = \frac{|\xi|^{\alpha}\xi_{1}}{(1+|\xi|^2)^{\alpha/2}} - \xi_{1}   .   \llabel{EQ11}   \end{align} It is possible to check that the symbol satisfies the assumptions of  the H\"{o}rmander-Mikhlin theorem and  thus $\Vert N\rho \Vert_{L^\qq} \leq C\Vert \rho \Vert_{L^\qq}$ for $1 < \qq < \infty$. However, as asserted in the next lemma, a stronger statement holds. Namely,  the operator $N$ defined in \eqref{EQ09} is smoothing of order~$1$. \par\heyu{ F5lr 3dFrxG lT c sby AEN cqA 98 1IQ4 UGpB k0gBeJ 6D n 9Jh kne 5f5 18 umOu LnIa spzcRf oC 0 StS y0D F8N Nz F2Up PtNG 50tqKT k2 e 51y Ubr szn Qb eIui Y5qa SGjcXi El 4 5B5 Pny Qtn UO MHis kTC2 KsWkjh a6 l oMf gZK} \cole \begin{Lemma} \label{L01} Consider the Fourier multiplier $T_{\tilde m}$ with the symbol    \begin{equation}   \tilde m(\xi) = (|\xi|^{2}+1)^{1/2} m(\xi)      .   \llabel{EQ12}   \end{equation} Then $T_{\tilde m}$ is a H\"{o}rmander-Mikhlin operator  satisfying   \begin{equation}   \Vert T_{\tilde m}f \Vert_{L^\qq}    \les   \Vert f \Vert_{L^\qq}      \comma    f\in L^\qq   ,   \label{EQ13}   \end{equation} for  $1<\qq<\infty$. \end{Lemma}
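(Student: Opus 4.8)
The plan is to write $\tilde m$ in closed form and then verify the pointwise Mikhlin--H\"{o}rmander condition $|\partial_{\xi}^{\beta}\tilde m(\xi)|\le C_{\beta}|\xi|^{-|\beta|}$ for all $\xi\ne0$ and $|\beta|\le 2$, which is the range required by the multiplier theorem on ${\mathbb R}^{2}$; this yields that $T_{\tilde m}$ is bounded on $L^{\qq}$ for $1<\qq<\infty$, i.e.\ \eqref{EQ13}. Writing $\langle\xi\rangle=(1+|\xi|^{2})^{1/2}$, one has
\begin{equation*}
\tilde m(\xi)=\langle\xi\rangle\, m(\xi)=\frac{\xi_{1}|\xi|^{\al}}{\langle\xi\rangle^{\al-1}}-\xi_{1}\langle\xi\rangle=\frac{\xi_{1}\bigl(|\xi|^{\al}-\langle\xi\rangle^{\al}\bigr)}{\langle\xi\rangle^{\al-1}} .
\end{equation*}
I would then estimate the derivatives of $\tilde m$ separately on the regions $\{|\xi|\le1\}$ and $\{|\xi|\ge1\}$.

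On $\{|\xi|\le1\}$ I use the middle expression. The summand $\xi_{1}\langle\xi\rangle$ is real-analytic, so all of its derivatives are bounded there and hence $\le C\le C|\xi|^{-|\beta|}$. For $\xi_{1}|\xi|^{\al}\langle\xi\rangle^{-(\al-1)}$, the factor $\langle\xi\rangle^{-(\al-1)}$ is real-analytic with derivatives bounded on $\{|\xi|\le1\}$, while $\xi_{1}|\xi|^{\al}$ is smooth on ${\mathbb R}^{2}\setminus\{0\}$ and positively homogeneous of degree $\al+1$, so $\partial^{\gamma}(\xi_{1}|\xi|^{\al})$ is homogeneous of degree $\al+1-|\gamma|\ge\al-1>0$ for $|\gamma|\le2$, whence $|\partial^{\gamma}(\xi_{1}|\xi|^{\al})|\le C|\xi|^{\al+1-|\gamma|}\le C|\xi|^{-|\gamma|}$ for $|\xi|\le1$; the Leibniz rule then gives $|\partial^{\beta}\tilde m(\xi)|\le C|\xi|^{-|\beta|}$ on $\{|\xi|\le1\}$ for $|\beta|\le2$. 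This low-frequency bound is the main point of the argument, and it is exactly where $\al>1$ enters: it keeps $\xi_{1}|\xi|^{\al}$ homogeneous of a degree strictly larger than $2$, so the two differentiations needed by the multiplier theorem produce no singularity at the origin, even though $|\xi|^{\al}$ itself is only $C^{1}$ there.

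On $\{|\xi|\ge1\}$ I set $u=|\xi|^{-2}\in(0,1]$ and factor $|\xi|^{\al}-\langle\xi\rangle^{\al}=|\xi|^{\al}\bigl(1-(1+u)^{\al/2}\bigr)$ and $\langle\xi\rangle^{\al-1}=|\xi|^{\al-1}(1+u)^{(\al-1)/2}$, which yields
\begin{equation*}
\tilde m(\xi)=\frac{\xi_{1}}{|\xi|}\,\Psi(|\xi|^{-2}),\qquad \Psi(u):=\frac{1-(1+u)^{\al/2}}{u\,(1+u)^{(\al-1)/2}} .
\end{equation*}
Since $1-(1+u)^{\al/2}=-\tfrac{\al}{2}\,u+O(u^{2})$ near $u=0$, the quotient $\Psi$ extends to a $C^{\infty}$ function on $[0,1]$ with $\Psi(0)=-\al/2$. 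The factor $\xi_{1}/|\xi|$ is homogeneous of degree $0$ and smooth on ${\mathbb R}^{2}\setminus\{0\}$, so $|\partial^{\gamma}(\xi_{1}/|\xi|)|\le C|\xi|^{-|\gamma|}$; and on $\{|\xi|\ge1\}$, using that $\Psi\in C^{\infty}([0,1])$ and that the derivatives of $|\xi|^{-2}$ decay, the chain rule gives $|\partial^{\gamma}\Psi(|\xi|^{-2})|\le C|\xi|^{-|\gamma|}$. The Leibniz rule then yields $|\partial^{\beta}\tilde m(\xi)|\le C|\xi|^{-|\beta|}$ on $\{|\xi|\ge1\}$ for $|\beta|\le2$. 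Since the two regions cover ${\mathbb R}^{2}\setminus\{0\}$, the symbol $\tilde m$ satisfies the Mikhlin--H\"{o}rmander condition, and the multiplier theorem gives \eqref{EQ13}.
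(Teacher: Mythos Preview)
Your proof is correct and follows essentially the same route as the paper: write $\tilde m$ explicitly, split into the regions $|\xi|\lesssim 1$ and $|\xi|\gtrsim 1$, and verify the H\"ormander--Mikhlin condition in each. The only cosmetic difference is that the paper first factors $\tilde m=-\frac{\xi_1}{\langle\xi\rangle}\,\bar m(\xi)$ and represents $\bar m(\xi)=\frac{\alpha}{2}\int_0^1\bigl((1+|\xi|^2)/(t+|\xi|^2)\bigr)^{1-\alpha/2}\,dt$ before checking the two regimes, while you use the additive split and homogeneity at low frequencies and the substitution $u=|\xi|^{-2}$ at high frequencies; both exploit the same cancellation and the key hypothesis $\alpha>1$ in the same way.
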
 \colb \par\heyu{ G3n Hp h0gn NQ7q 0QxsQk gQ w Kwy hfP 5qF Ww NaHx SKTA 63ClhG Bg a ruj HnG Kf4 6F QtVt SPgE gTeY6f JG m B3q gXx tR8 RT CPB1 8kQa jtt6GD rK b 1VY LV3 RgW Ir AyZf 69V8 VM7jHO b7 z Lva XTT VI0 ON KMBA HOwO Z7dPky Cg} An equivalent way of stating \eqref{EQ13} is   \begin{equation}   \Vert N f\Vert_{L^\qq}   +   \Vert \nabla N f\Vert_{L^\qq}   \les   \Vert f\Vert_{L^\qq}      \comma f\in L^{\qq}   \commaone \qq\in(1,\infty)   .    \notag   \end{equation} \par\heyu{ U S74 Hln FZM Ha br8m lHbQ NSwwdo mO L 6q5 wvR exV ej vVHk CEdX m3cU54 ju Z SKn g8w cj6 hR 1FnZ Jbkm gKXJgF m5 q Z5S ubX vPK DB OCGf 4srh 1a5FL0 vY f RjJ wUm 2sf Co gRha bxyc 0Rgava Rb k jzl teR GEx bE MMhL} \begin{proof}[Proof of Lemma~\ref{L01}] It suffices to prove that the symbol    \begin{equation}   \tilde m(\xi)   =   \xi_{1}   \frac{ (1+|\xi|^2)^{\alpha/2}-|\xi|^{\alpha}}{(1+|\xi|^2)^{\alpha-1/2}}   \llabel{EQ15}   \end{equation} satisfies the H\"{o}rmander-Mikhlin condition   \begin{equation}   |\partial^{\alpha} \tilde m(\xi)|   \leq   \frac{C(|\alpha|)}{|\xi|^{\alpha}}   \comma   \alpha \in \mathbb N_{0}^{2}   \comma   \xi \in \mathbb R^{2} \setminus \{0\}   .   \llabel{EQ16}   \end{equation} Since $\xi_{1}/(1+|\xi|^2)^{1/2}$ is of H\"{o}rmander-Mikhlin type, it is sufficient to prove that   \begin{equation}   \bar{m}(\xi)     =(1+|\xi|^2)^{1-\alpha/2}       (        (1+|\xi|^2)^{\alpha/2}        -        |\xi|^{\alpha}     )   \llabel{EQ17}   \end{equation} satisfies the H\"{o}rmander-Mikhlin condition. In order to check this, we write   \begin{equation}   \bar{m}(\xi)=\frac{\alpha}{2} \int_{0}^{1}\frac{(1+|\xi|^2)^{1-\alpha/2}}{(t+|\xi|^2)^{1-\alpha/2}}\,dt   \llabel{EQ18}   \end{equation} and then verify that the condition holds  for the low and high frequencies, i.e., when $|\xi|\les 1$ and $|\xi|\gtrsim 1$ respectively. \end{proof}   \par\heyu{ Zbh3 axosCq u7 k Z1P t6Y 8zJ Xt vmvP vAr3 LSWDjb VP N 7eN u20 r8B w2 ivnk zMda 93zWWi UB H wQz ahU iji 2T rXI8 v2HN ShbTKL eK W 83W rQK O4T Zm 57yz oVYZ JytSg2 Wx 4 Yaf THA xS7 ka cIPQ JGYd Dk0531 u2 Q IKf REW} Next, we recall a version of the Kato-Ponce inequality from \cite{KWZ}. \par\heyu{ YcM KM UT7f dT9E kIfUJ3 pM W 59Q LFm u02 YH Jaa2 Er6K SIwTBG DJ Y Zwv fSJ Qby 7f dFWd fT9z U27ws5 oU 5 MUT DJz KFN oj dXRy BaYy bTvnhh 2d V 77o FFl t4H 0R NZjV J5BJ pyIqAO WW c efd R27 nGk jm oEFH janX f1ONEc yt} \cole \begin{Lemma}[\cite{KWZ}] \label{L02} Let $s \in (0,1)$ and $f,g\in \mathcal S(\mathbb R^2)$. For $1<q<\infty$ and $j \in \{1,2 \}$, the inequality   \begin{equation}   \Vert [\Lambda^s \partial_j,g]f\Vert_{L^q}   \leq   C\Vert f\Vert_{L^{q_1}} \Vert\Lambda^{1+s}g\Vert_{L^{\tilde q_{1}}} +    C\Vert \Lambda^s f\Vert_{L^{q_2}} \Vert \Lambda g\Vert_{L^{\tilde q_{2}}}   \llabel{EQ19}   \end{equation} holds, where  $q_1,  \tilde q_1, \tilde q_2 \in [q,\infty]$ and $q_2 \in [q, \infty)$ satisfy $1/q = 1/q_{1} + 1/\tilde q_{1} =1/q_{2} + 1/\tilde q_{2} $ and $C=C(q_1,  \tilde q_{1}, \tilde q_{2},q_2,s)$. \end{Lemma}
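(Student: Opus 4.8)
The statement is a Kato--Ponce-type commutator estimate, and my plan is to prove it by a Littlewood--Paley (Bony) decomposition. First I would view $\Lambda^{s}\partial_{j}$ as the Fourier multiplier $P(D)$ with symbol $p(\xi)=|\xi|^{s}\,i\xi_{j}$, which is homogeneous of degree $\sigma:=1+s\in(1,2)$, is smooth on $\mathbb R^{2}\setminus\{0\}$, and satisfies $|\partial^{\beta}p(\xi)|\lesssim|\xi|^{\sigma-|\beta|}$, so that the claim becomes a commutator bound for a homogeneous symbol of order in $(1,2)$ on the Schwartz class. A tempting shortcut is to write $[\Lambda^{s}\partial_{j},g]f=[\Lambda^{s},g]\partial_{j}f+\Lambda^{s}((\partial_{j}g)f)$ and quote the fractional Leibniz rule together with a Kenig--Ponce--Vega commutator bound: the second summand is indeed controlled by the Leibniz rule, producing exactly the two terms on the right-hand side once Riesz transforms are used to pass between $\nabla$ and $\Lambda$; however, the commutator $[\Lambda^{s},g]\partial_{j}f$ leaves a residual term of the shape $\Vert\Lambda^{s}g\Vert\,\Vert\partial_{j}f\Vert$, which is \emph{not} dominated by the right-hand side of the lemma, so this route does not close and the paraproduct argument seems unavoidable.

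Writing $gf=T_{g}f+T_{f}g+R(g,f)$ and expanding $gP(D)f$ the same way, one gets $[P(D),g]f=\Pi_{1}+\Pi_{2}+\Pi_{3}$, with $\Pi_{1}=\sum_{k}[P(D),S_{k-2}g]\Delta_{k}f$, $\Pi_{2}=\sum_{k}\big(P(D)(S_{k-2}f\,\Delta_{k}g)-(P(D)S_{k-2}f)\,\Delta_{k}g\big)$, and $\Pi_{3}$ the high--high (remainder) contribution. For $\Pi_{1}$ I would Taylor-expand $p(\xi)-p(\xi-\eta)=\eta\cdot\nabla p(\xi-\eta)+\cdots$, with $|\eta|$ the low frequency of $S_{k-2}g$ and $|\xi-\eta|\sim 2^{k}$; since $\nabla p$ has order $\sigma-1=s$, each term of the expansion is, after a Bernstein inequality on the factor $\nabla^{|\beta|}S_{k-2}g$, of size $2^{ks}\Vert\nabla g\Vert\,\Vert\Delta_{k}f\Vert$ in the relevant norms, and summing in $k$ via the Littlewood--Paley square function (with the Hardy--Littlewood maximal inequality absorbing the low-frequency factor) gives $\Vert\Pi_{1}\Vert_{L^{q}}\lesssim\Vert\nabla g\Vert_{L^{\tilde q_{2}}}\Vert\Lambda^{s}f\Vert_{L^{q_{2}}}\lesssim\Vert\Lambda g\Vert_{L^{\tilde q_{2}}}\Vert\Lambda^{s}f\Vert_{L^{q_{2}}}$, the second term in the claim. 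For $\Pi_{2}$ the low-frequency factor is $S_{k-2}f$, and the analogous expansion about the high frequency $\Delta_{k}g$ moves all the weight onto $g$: the derivatives produced on $S_{k-2}f$ cost only the factor $2^{k}$ by Bernstein, which combines with the order-$s$ operator on $\Delta_{k}g$ to give $2^{k\sigma}\Vert\Delta_{k}g\Vert\sim\Vert\Delta_{k}\Lambda^{1+s}g\Vert$; the cancellation is essential here precisely because it turns what would be an $\ell^{1}$-in-$k$ sum into a square-function sum, and summation then gives $\Vert\Pi_{2}\Vert_{L^{q}}\lesssim\Vert f\Vert_{L^{q_{1}}}\Vert\Lambda^{1+s}g\Vert_{L^{\tilde q_{1}}}$, the first term in the claim.

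The hard part will be the remainder $\Pi_{3}=\sum_{k}\big(P(D)(\Delta_{k}g\,\widetilde\Delta_{k}f)-\Delta_{k}g\,P(D)\widetilde\Delta_{k}f\big)$ with $\widetilde\Delta_{k}=\Delta_{k-1}+\Delta_{k}+\Delta_{k+1}$: both inputs live at frequency $\sim 2^{k}$, the product is supported in a ball of radius $\sim 2^{k}$ rather than a thin annulus, so there is no commutator cancellation and the outputs at different $k$ overlap in frequency, which blocks a direct square-function summation in $k$. I would estimate the two summands separately using $\Vert P(D)h\Vert_{L^{q}}\lesssim 2^{k\sigma}\Vert h\Vert_{L^{q}}$ for $h$ with frequency support in $\{|\xi|\lesssim 2^{k}\}$ (Bernstein plus the symbol bounds for $p$), obtaining the pointwise-in-$k$ estimate $2^{k\sigma}\Vert\Delta_{k}g\Vert\,\Vert\widetilde\Delta_{k}f\Vert$; then I would split $2^{k\sigma}=2^{k}\cdot 2^{ks}$ to distribute the derivatives as one on $g$ and $s$ on $f$, decompose the output by frequency, and sum by Cauchy--Schwarz in the frequency index together with the Littlewood--Paley characterization of $L^{q}$ and the Fefferman--Stein vector-valued maximal inequality. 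This final summation is exactly where all the hypotheses enter: $1<q<\infty$ for the square-function equivalences, the conditions $q_{1},\tilde q_{1},\tilde q_{2}\in[q,\infty]$ and (strictly) $q_{2}\in[q,\infty)$ for the Hölder pairings and the vector-valued maximal bound, and $s\in(0,1)$ so that both resulting operators have positive order strictly below $1$. Endpoint exponents equal to $\infty$, for which the conjugate exponent is then forced to be $q$, would be handled by replacing the corresponding square-function step with uniform Besov-type bounds. Since $f,g\in\mathcal S(\mathbb R^{2})$ every manipulation above is rigorous, and the dimension being $2$ plays no role.
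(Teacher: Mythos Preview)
The paper does not actually prove Lemma~\ref{L02}; it is quoted from \cite{KWZ}. The closest thing to a proof in the paper is the argument for the related Lemma~\ref{L05}, which follows the Kato--Ponce/Coifman--Meyer strategy of \cite{KP}: one writes the commutator as a bilinear Fourier integral, splits the $(\xi,\eta)$-plane into three regions via smooth cutoffs $\Phi_{k}(|\xi|/|\eta|)$ (roughly $|\xi|\ll|\eta|$, $|\xi|\sim|\eta|$, $|\xi|\gg|\eta|$), checks in the first and third regions that the resulting symbol is of Coifman--Meyer type, and treats the middle region by complex interpolation. Your Bony paraproduct decomposition $\Pi_{1}+\Pi_{2}+\Pi_{3}$ is a genuinely different organization of the same frequency trichotomy: it uses Littlewood--Paley projectors and square functions in place of bilinear multiplier theory. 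The Coifman--Meyer route has the advantage that the full range of H\"older pairings, including the endpoints $q_{1},\tilde q_{1},\tilde q_{2}=\infty$, falls out of a single invocation of the bilinear theorem, whereas your approach requires the separate Besov-type patches at $L^{\infty}$ that you flag. On the other hand, your approach makes the first-order Taylor cancellation in $\Pi_{1}$ and the derivative bookkeeping more explicit, and avoids the complex-interpolation step for the diagonal piece.

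One small correction: in your treatment of $\Pi_{2}$ (high-frequency $g$, low-frequency $f$), the product $S_{k-2}f\,\Delta_{k}g$ already has Fourier support in an annulus of radius $\sim 2^{k}$, so the square-function summation works term by term without any use of the commutator structure; the cancellation is not ``essential'' there, only convenient. The place where the commutator structure is genuinely indispensable is $\Pi_{1}$, exactly as you describe. With that caveat, your sketch is correct.
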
 \colb    \par\heyu{ o INt D90 ONa nd awDR Ki2D JzAqYH GC T B0p zdB a3O ot Pq1Q VFva YNTVz2 sZ J 6ey Ig2 N7P gi lKLF 9Nzc rhuLeC eX w b6c MFE xfl JS E8Ev 9WHg Q1Brp7 RO M ACw vAn ATq GZ Hwkd HA5f bABXo6 EW H soW 6HQ Yvv jc ZgRk} Finally, we recall from \cite{CC,J} an inequality useful for treating the fractional coercive term. \par\heyu{ OWAb VA0zBf Ba W wlI V05 Z6E 2J QjOe HcZG Juq90a c5 J h9h 0rL KfI Ht l8tP rtRd qql8TZ GU g dNy SBH oNr QC sxtg zuGA wHvyNx pM m wKQ uJF Kjt Zr 6Y4H dmrC bnF52g A0 3 28a Vuz Ebp lX Zd7E JEEC 939HQt ha M sup Tcx} \cole \begin{Lemma}[\cite{CC,J}] \label{L03} Consider the operator $\Lambda$ defined in \eqref{EQ04} on $\mathbb R^2$. If $\theta, \Lambda^s \theta \in L^p$, where $p \geq 2$, then   \begin{equation}   \int_{\mathbb R^2}    |\theta|^{p-2}\theta \Lambda^{s}\theta   \,dx    \geq    \frac{2}{p} \int_{\mathbb R^2} (\Lambda^{s/2}(|\theta|^{p/2}))^2     \,dx   ,   \label{EQ20}   \end{equation} for all $s\in(0,2)$. \end{Lemma}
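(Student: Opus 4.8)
The plan is to deduce \eqref{EQ20} from an elementary pointwise inequality, using the singular-integral representation of $\Lambda^{s}$. For $0<s<2$ and $f\in\mathcal S(\mathbb R^{2})$ one has
\[
  \Lambda^{s} f(x) = c\,{\rm p.v.}\!\int_{\mathbb R^{2}} \frac{f(x)-f(y)}{|x-y|^{2+s}}\,dy
\]
for a suitable constant $c=c(s)>0$, and consequently, by Plancherel and the substitution $x\leftrightarrow y$,
\[
  \int_{\mathbb R^{2}} g\,\Lambda^{s} g\,dx
  = \frac{c}{2}\int_{\mathbb R^{2}}\!\int_{\mathbb R^{2}} \frac{(g(x)-g(y))^{2}}{|x-y|^{2+s}}\,dx\,dy
  = \int_{\mathbb R^{2}} \bigl(\Lambda^{s/2} g\bigr)^{2}\,dx .
\]
Multiplying the first identity with $f=\theta$ by $|\theta(x)|^{p-2}\theta(x)$, integrating in $x$, and symmetrizing in $x\leftrightarrow y$ gives
\[
  \int_{\mathbb R^{2}} |\theta|^{p-2}\theta\,\Lambda^{s}\theta\,dx
  = \frac{c}{2}\int_{\mathbb R^{2}}\!\int_{\mathbb R^{2}}
     \frac{\bigl(|\theta(x)|^{p-2}\theta(x)-|\theta(y)|^{p-2}\theta(y)\bigr)\bigl(\theta(x)-\theta(y)\bigr)}{|x-y|^{2+s}}\,dx\,dy .
\]
Taking $g=|\theta|^{p/2}$ in the second identity, we see that \eqref{EQ20} is a consequence of the pointwise inequality
\[
  \bigl(|a|^{p-2}a-|b|^{p-2}b\bigr)(a-b) \geq \frac{2}{p}\bigl(|a|^{p/2}-|b|^{p/2}\bigr)^{2}
  \comma a,b\in\mathbb R
\]
since the factor $c(s)$ then cancels from both sides.

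Both sides of this pointwise inequality are symmetric in $a$ and $b$ and invariant under $(a,b)\mapsto(-a,-b)$, so it suffices to treat the cases (i) $a\geq b\geq 0$ and (ii) $a>0>b$. In case (i) all the integrands below are nonnegative because $p\geq2$, and the Cauchy--Schwarz inequality gives
\[
  \bigl(a^{p/2}-b^{p/2}\bigr)^{2}
  = \frac{p^{2}}{4}\Bigl(\int_{b}^{a} r^{p/2-1}\,dr\Bigr)^{2}
  \leq \frac{p^{2}}{4}\,(a-b)\int_{b}^{a} r^{p-2}\,dr
  = \frac{p^{2}}{4(p-1)}\,(a-b)\bigl(a^{p-1}-b^{p-1}\bigr) ,
\]
that is, $(a^{p-1}-b^{p-1})(a-b)\geq \tfrac{4(p-1)}{p^{2}}(a^{p/2}-b^{p/2})^{2}$; since $4(p-1)\geq 2p$ for $p\geq2$, the smaller constant $\tfrac{2}{p}$ in the claim is admissible. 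In case (ii), writing $\beta=-b>0$, the left-hand side equals $(a^{p-1}+\beta^{p-1})(a+\beta)\geq a^{p}+\beta^{p}$, while the right-hand side is $\tfrac{2}{p}(a^{p/2}-\beta^{p/2})^{2}\leq \tfrac{2}{p}(a^{p}+\beta^{p})\leq a^{p}+\beta^{p}$, so the inequality holds again.

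The main (though routine) obstacle is to justify the integral manipulations above under the stated hypotheses, where only $\theta,\Lambda^{s}\theta\in L^{p}$ is assumed rather than $\theta\in\mathcal S$. The route is to prove \eqref{EQ20} first for Schwartz $\theta$, for which every identity above is legitimate, and then to pass to the general case by mollification together with a density argument, estimating the error terms in the relevant norms, as is done in \cite{CC,J}. The statement is consistent with this plan: by Hölder's inequality the left-hand side of \eqref{EQ20} is at most $\Vert\theta\Vert_{L^{p}}^{p-1}\Vert\Lambda^{s}\theta\Vert_{L^{p}}<\infty$, so the symmetrized double integral is finite, whence $|\theta|^{p/2}$ lies in the homogeneous Sobolev space $\dot H^{s/2}(\mathbb R^{2})$ and the right-hand side of \eqref{EQ20} is finite as well. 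Some care with the normalization $c(s)$ is needed as $s\uparrow2$, where the kernel degenerates, but the two displayed identities survive the passage to the limit since each is equivalent to a statement on the Fourier side.
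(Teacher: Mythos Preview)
The paper does not prove Lemma~\ref{L03}; it simply recalls the inequality from the references \cite{CC,J} and uses it as a black box. Your proposal supplies the standard proof from those sources: the singular-integral representation of $\Lambda^{s}$, symmetrization in $(x,y)$, and reduction to the elementary pointwise bound $(|a|^{p-2}a-|b|^{p-2}b)(a-b)\ge \tfrac{2}{p}\bigl(|a|^{p/2}-|b|^{p/2}\bigr)^{2}$, which you verify correctly (indeed with the sharper constant $4(p-1)/p^{2}$ in the same-sign case). The argument is sound and matches the approach of \cite{CC,J}; the only points requiring care are the ones you flag yourself, namely the approximation step to pass from $\theta\in\mathcal S$ to $\theta,\Lambda^{s}\theta\in L^{p}$ and the fact that $|\theta|^{p/2}$ need not be smooth even for Schwartz $\theta$, both of which are handled by mollification exactly as in the cited papers.
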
 \colb \par\heyu{ VaZ 32 pPdb PIj2 x8Azxj YX S q8L sof qmg Sq jm8G 4wUb Q28LuA ab w I0c FWN fGn zp VzsU eHsL 9zoBLl g5 j XQX nR0 giR mC LErq lDIP YeYXdu UJ E 0Bs bkK bjp dc PLie k8NW rIjsfa pH h 4GY vMF bA6 7q yex7 sHgH G3GlW0 y1} \startnewsection{An $L^q$ inequality for the vorticity and a Kato-Ponce type commutator estimate}{sec03}  The following lemma provides an $L^q$ bound for the modified vorticity~$\zeta$.  \par\heyu{ W D35 mIo 5gE Ub Obrb knjg UQyko7 g2 y rEO fov QfA k6 UVDH Gl7G V3LvQm ra d EUO Jpu uzt BB nrme filt 1sGSf5 O0 a w2D c0h RaH Ga lEqI pfgP yNQoLH p2 L AIU p77 Fyg rj C8qB buxB kYX8NT mU v yT7 YnB gv5 K7 vq5N} \par\heyu{ efB5 ye4TMu Cf m E2J F7h gqw I7 dmNx 2CqZ uLFthz Il B 1sj KA8 WGD Kc DKva bk9y p28TFP 0r g 0iA 9CB D36 c8 HLkZ nO2S 6Zoafv LX b 8go pYa 085 EM RbAb QjGt urIXlT E0 G z0t YSV Use Cj DvrQ 2bvf iIJCdf CA c WyI O7m} \cole \begin{Lemma} \label{L04} Assume that $u_0,\rho_0\in W^{s,q}(\mathbb R^{2})$, where $s> 1$ and $q>2$. Then we have   \begin{equation}   \Vert \zeta \Vert_{L^q} \leq Ce^{Ct}   \comma t\ge0   \label{EQ21}   \end{equation} and   \begin{equation}   \Vert \omega \Vert_{L^q} \leq Ce^{Ct}   \comma t\ge0   ,   \label{EQ22}   \end{equation} where $C = C(\Vert \omega_{0} \Vert_{L^q}, \Vert \rho_{0} \Vert_{L^q})$. Moreover, we have   \begin{equation}   \int_{0}^{t} \Vert \Lambda^{\alpha/2} (|\zeta|^{q/2}) \Vert_{L^{2}}^{2}    \,dx   \leq   C e^{C t}   ,   \label{EQ23}   \end{equation} for all $t\ge0$. \end{Lemma}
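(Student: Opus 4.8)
The plan is to carry out an $L^q$ energy estimate directly on the equation~\eqref{EQ10} for the modified vorticity $\zeta$, exploiting that the right-hand side $[\SS,u\cdot\nabla]\rho-N\rho$ is much better behaved than the term $\partial_1\rho$ that obstructs the analogous estimate for the vorticity equation~\eqref{EQ05}. Before that I collect three standard facts. First, since \eqref{EQ02} transports $\rho$ along the divergence-free field $u$, all Lebesgue norms are conserved: $\|\rho(t)\|_{L^p}=\|\rho_0\|_{L^p}$ for every $p\in[1,\infty]$; in particular $\|\rho(t)\|_{L^q}=\|\rho_0\|_{L^q}$ and $\|\rho(t)\|_{L^\infty}=\|\rho_0\|_{L^\infty}\le C\|\rho_0\|_{W^{s,q}}$, the last bound by the Sobolev embedding $W^{s,q}\hookrightarrow L^\infty$ (valid because $sq>2$). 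Second, by Biot--Savart and Calder\'on--Zygmund theory, $\|\nabla u\|_{L^q}\le C\|\omega\|_{L^q}$. Third, the symbol $\xi_1(1+|\xi|^2)^{-\alpha/2}$ of $\SS$ is bounded and of Mikhlin type, so $\|\SS\rho\|_{L^q}\le C\|\rho\|_{L^q}$, whence $\|\omega\|_{L^q}\le\|\zeta\|_{L^q}+C\|\rho_0\|_{L^q}$; thus a bound on $\|\zeta\|_{L^q}$ yields bounds on $\|\omega\|_{L^q}$ and $\|\nabla u\|_{L^q}$, which is what makes the scheme close.

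Testing \eqref{EQ10} against $|\zeta|^{q-2}\zeta$ and integrating over $\mathbb R^2$, the convective term drops since $\tfrac1q\int u\cdot\nabla|\zeta|^q\,dx=0$ for $\nabla\cdot u=0$, and the dissipative term is controlled from below by Lemma~\ref{L03} with $p=q\ge2$ and $s=\alpha\in(1,2)\subset(0,2)$, giving $\int_{\mathbb R^2}|\zeta|^{q-2}\zeta\,\Lambda^\alpha\zeta\,dx\ge\tfrac2q\|\Lambda^{\alpha/2}(|\zeta|^{q/2})\|_{L^2}^2$. Hence
\begin{equation*}
\frac1q\frac{d}{dt}\|\zeta\|_{L^q}^q+\frac2q\|\Lambda^{\alpha/2}(|\zeta|^{q/2})\|_{L^2}^2\le\|\zeta\|_{L^q}^{q-1}\Bigl(\|[\SS,u\cdot\nabla]\rho\|_{L^q}+\|N\rho\|_{L^q}\Bigr),
\end{equation*}
and Lemma~\ref{L01}, in the equivalent form stated just after its proof, gives $\|N\rho\|_{L^q}\le C\|\rho\|_{L^q}=C\|\rho_0\|_{L^q}$.

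The remaining, and crucial, point is the commutator bound. Using $\nabla\cdot u=0$ to write $u\cdot\nabla\rho=\nabla\cdot(u\rho)$ and integrating by parts, one obtains, with $K$ the kernel of $\SS$,
\begin{equation*}
[\SS,u\cdot\nabla]\rho(x)=\int_{\mathbb R^2}(\nabla K)(x-y)\cdot\bigl(u(y)-u(x)\bigr)\rho(y)\,dy.
\end{equation*}
Since $\SS=\partial_1(I-\Delta)^{-\alpha/2}$ is smoothing of order $\alpha-1>0$, the operator $\nabla\SS$ has order $2-\alpha\in(0,1)$, so $K$ is a Bessel-type kernel with $|\nabla K(z)|\lesssim|z|^{-(4-\alpha)}$ near the origin and exponential decay for $|z|$ large; consequently $\int_{\mathbb R^2}|\nabla K(z)|\,|z|\,dz<\infty$ \emph{precisely because} $\alpha>1$ (the local integrand is $\sim|z|^{-(3-\alpha)}$). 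Writing $u(y)-u(x)=\int_0^1\nabla u(x+t(y-x))\cdot(y-x)\,dt$ and taking the $L^q_x$ norm through Minkowski's inequality then gives
\begin{equation*}
\|[\SS,u\cdot\nabla]\rho\|_{L^q}\le\Bigl(\int_{\mathbb R^2}|\nabla K(z)|\,|z|\,dz\Bigr)\|\rho\|_{L^\infty}\|\nabla u\|_{L^q}\le C\|\rho_0\|_{W^{s,q}}\,\|\omega\|_{L^q};
\end{equation*}
alternatively, one may invoke the Kato--Ponce-type commutator estimate of Section~\ref{sec03}, which distributes the derivatives via a Littlewood--Paley decomposition.

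Combining the last three displays with $\|\omega\|_{L^q}\le\|\zeta\|_{L^q}+C\|\rho_0\|_{L^q}$ and Young's inequality yields $\frac{d}{dt}\|\zeta\|_{L^q}^q\le C\|\zeta\|_{L^q}^q+C$, where $C$ depends on $\|\omega_0\|_{L^q}$ and $\|\rho_0\|_{W^{s,q}}$; since $\|\zeta(0)\|_{L^q}\le\|\omega_0\|_{L^q}+C\|\rho_0\|_{L^q}<\infty$, Gr\"onwall's inequality gives \eqref{EQ21}, and then \eqref{EQ22} follows from $\omega=\zeta+\SS\rho$. Finally, integrating the differential inequality in time and using \eqref{EQ21} produces \eqref{EQ23}. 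I expect the commutator estimate to be the main obstacle: one must use that $\alpha>1$ genuinely smooths $\SS$ (so that $\nabla\SS$ has a kernel integrable against $|z|$, or, in the Littlewood--Paley formulation, so that the lost derivative can be placed on $u$), and one must arrange that only $\|\nabla u\|_{L^q}$ and $\|\rho\|_{L^\infty}$ appear, since the hypotheses do not provide $u\in L^2$ and hence offer no control of $\|u\|_{L^2}$ or $\|u\|_{L^\infty}$. All the estimates above are a priori bounds, to be justified on a smooth approximating sequence and then passed to the limit in the usual way.
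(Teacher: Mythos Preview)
Your proof is correct, but your treatment of the commutator term differs from the paper's in an interesting way. The paper rewrites $[\SS,u\cdot\nabla]\rho=[\partial_j\SS,u_j]\rho$ and applies the Kato--Ponce-type Lemma~\ref{L05} with $\mu=0$, producing a bound
\[
\Vert[\SS,u\cdot\nabla]\rho\Vert_{L^q}\lesssim \Vert\bSS\rho\Vert_{L^{q/(\alpha-1)}}\Vert\omega\Vert_{L^{q/(2-\alpha)}}+\Vert\rho\Vert_{L^{q/(\alpha-1)}}\Vert\bSS\omega\Vert_{L^{q/(2-\alpha)}}.
\]
Because the vorticity appears in $L^{q/(2-\alpha)}$ rather than $L^q$, the paper then invokes the fractional Gagliardo--Nirenberg inequality~\eqref{EQ31} to interpolate $\Vert\zeta\Vert_{L^{q/(2-\alpha)}}$ between $\Vert\zeta\Vert_{L^q}$ and the dissipative quantity $\Vert\Lambda^{\alpha/2}(|\zeta|^{q/2})\Vert_{L^2}$, and finally absorbs the latter by Young's inequality. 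Your kernel argument, based on $\int|\nabla K(z)|\,|z|\,dz<\infty$ (which is exactly where $\alpha>1$ enters), gives directly $\Vert[\SS,u\cdot\nabla]\rho\Vert_{L^q}\lesssim\Vert\rho\Vert_{L^\infty}\Vert\nabla u\Vert_{L^q}$ and closes the Gr\"onwall loop without any interpolation or absorption step.

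Your route is more elementary and self-contained for this particular lemma; it bypasses Lemma~\ref{L05} entirely. The paper's route, on the other hand, introduces and exercises Lemma~\ref{L05} here because that lemma is indispensable later (Sections~\ref{sec04}--\ref{sec05}), where one must estimate $\Lambda^{s-1}[\SS,u\cdot\nabla]\rho$ and the simple mean-value kernel argument no longer suffices. Both proofs ultimately rely on $\Vert\rho_0\Vert_{L^{\bar q}}$ for some $\bar q>q$ (you use $\bar q=\infty$, the paper uses $\bar q=q/(\alpha-1)$), which is available via the Sobolev embedding $W^{s,q}\hookrightarrow L^\infty$; so the stated dependence $C=C(\Vert\omega_0\Vert_{L^q},\Vert\rho_0\Vert_{L^q})$ in the lemma is, in either approach, a slight understatement.
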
 \colb \par\heyu{ lyc s5 Rjio IZt7 qyB7pL 9p y G8X DTz JxH s0 yhVV Ar8Z QRqsZC HH A DFT wvJ HeH OG vLJH uTfN a5j12Z kT v GqO yS8 826 D2 rj7r HDTL N7Ggmt 9M z cyg wxn j4J Je Qb7e MmwR nSuZLU 8q U NDL rdg C70 bh EPgp b7zk 5a32N1 Ib} Above and in the sequel, the exponent $q>2$  and the parameter $s>1$ are considered fixed, so we do not indicate dependence of constants on these parameters. \par\heyu{ J hf8 XvG RmU Fd vIUk wPFb idJPLl NG e 1RQ RsK 2dV NP M7A3 Yhdh B1R6N5 MJ i 5S4 R49 8lw Y9 I8RH xQKL lAk8W3 Ts 7 WFU oNw I9K Wn ztPx rZLv NwZ28E YO n ouf xz6 ip9 aS WnNQ ASri wYC1sO tS q Xzo t8k 4KO z7 8LG6} The main step in the proof of Lemma~\ref{L04} and Theorem \ref{T01} is an inhomogeneous Kato-Ponce type commutator estimate, which is stated next. \par\heyu{ GMNC ExoMh9 wl 5 vbs mnn q6H g6 WToJ un74 JxyNBX yV p vxN B0N 8wy mK 3reR eEzF xbK92x EL s 950 SNg Lmv iR C1bF HjDC ke3Sgt Ud C 4cO Nb4 EF2 4D 1VDB HlWA Tyswjy DO W ibT HqX t3a G6 mkfG JVWv 40lexP nI c y5c kRM} \cole \begin{Lemma} \label{L05} Denote   \begin{equation}   \bSS := |\nabla|(I-\Delta)^{-\alpha/2}.   \llabel{EQ200}   \end{equation} Then, for  $j\in\{1,2\}$ and $0\le\mu\le \alpha$, we have   \begin{equation}   \Vert [\Lambda^{\mu} \SS\partial_{j},g]f\Vert_{L^q}   \leq   C   \Vert \nabla g\Vert_{L^{r_1}} \Vert\Lambda^{\mu}\bSS f\Vert_{L^{\tilde r_{1}}}    +   C    \Vert \Lambda^{\mu+1}\bSS g\Vert_{L^{r_2}} \Vert f \Vert_{L^{\tilde r_{2}}}   ,   \llabel{EQ199}   \end{equation} where $r_1,  \tilde r_1, \tilde r_2 \in [q,\infty]$ and $r_2 \in [q, \infty)$ satisfy $1/q = 1/r_{1} + 1/\tilde r_{1} =1/r_{2} + 1/\tilde r_{2} $ and where $C=C(r_1,  \tilde r_{1}, \tilde r_{2},r_2,q)$. \end{Lemma}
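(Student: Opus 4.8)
\noindent The plan is to follow the proof of the homogeneous Kato--Ponce estimate of Lemma~\ref{L02} almost line by line, after recording two symbol estimates that take the place of the elementary bounds on $i\xi_{j}|\xi|^{s}$ used there. The operator $\Lambda^{\mu}\SS\partial_{j}$ is the Fourier multiplier with symbol
\begin{equation*}
M(\xi)=-\frac{|\xi|^{\mu}\xi_{1}\xi_{j}}{(1+|\xi|^{2})^{\alpha/2}} ,
\end{equation*}
of order $\mu+2-\alpha\in(0,2)$; since $\mu+2\ge 2$, the function $M$ is of class $C^{1}$ on $\mathbb{R}^{2}$ and smooth away from the origin (so its dyadic pieces are smooth with the natural scaling on each annulus). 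Write also $p(\xi)=|\xi|^{\mu+1}(1+|\xi|^{2})^{-\alpha/2}$ and $\psi(r)=r^{\mu+2}(1+r^{2})^{-\alpha/2}$, so that $p$ is the symbol of $\Lambda^{\mu}\bSS$ and $\psi(|\xi|)$ the symbol of $\Lambda^{\mu+1}\bSS$. On the frequency side $[\Lambda^{\mu}\SS\partial_{j},g]f$ is the bilinear operator with symbol $M(\xi)-M(\eta)$, where $\xi-\eta$ is the frequency fed into $g$ and $\eta$ the one fed into $f$; as in Lemma~\ref{L02}, a Littlewood--Paley decomposition of $f$ and $g$ reduces the estimate to the boundedness of a handful of Coifman--Meyer type bilinear multipliers over the regions where $g$ has frequency much lower than, comparable to, or much higher than that of $f$, and the exponent conditions together with the restriction $r_{2}<\infty$ come out exactly as there. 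By density it suffices to treat $f,g\in\mathcal{S}(\mathbb{R}^{2})$.

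The two symbol estimates are as follows. First, writing $M=-\xi_{1}\xi_{j}\phi$ with $\phi(\xi)=|\xi|^{\mu}(1+|\xi|^{2})^{-\alpha/2}$ and differentiating, the factor $\phi$ cancels and one obtains $\partial_{k}M=n_{k}\,p$ for every $k$, with
\begin{equation*}
n_{k}(\xi)=-\frac{\delta_{k1}\xi_{j}+\delta_{kj}\xi_{1}}{|\xi|}-\mu\,\frac{\xi_{1}\xi_{j}\xi_{k}}{|\xi|^{3}}+\alpha\,\frac{\xi_{1}\xi_{j}\xi_{k}}{|\xi|^{3}}\cdot\frac{|\xi|^{2}}{1+|\xi|^{2}} .
\end{equation*}
Each summand is a product of Riesz-type symbols and the bounded symbol $|\xi|^{2}(1+|\xi|^{2})^{-1}$, so $n_{k}$ satisfies the H\"{o}rmander-Mikhlin condition with a bound uniform for $0\le\mu\le\alpha$, the low/high frequency check being the one already performed in the proof of Lemma~\ref{L01}; in particular $|\nabla M(\xi)|\les p(\xi)$. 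Second, $|M(\xi)|\le\psi(|\xi|)$ trivially (since $|\xi_{1}\xi_{j}|\le|\xi|^{2}$), and $r\mapsto\psi(r)$ is increasing with at most polynomial growth because $\psi'(r)/\psi(r)=\bigl((\mu+2)+(\mu+2-\alpha)r^{2}\bigr)/\bigl(r(1+r^{2})\bigr)>0$, using $\mu+2-\alpha>0$.

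With these in hand the two regions are handled as usual. On the part where $g$ has frequency much lower than $f$, one has $|\xi-\eta|$ small compared to $|\eta|\sim|\xi|$ on the relevant supports, and I would expand by the fundamental theorem of calculus,
\begin{equation*}
M(\xi)-M(\eta)=\sum_{k}(\xi_{k}-\eta_{k})\int_{0}^{1}n_{k}\bigl(\eta+\tau(\xi-\eta)\bigr)\,p\bigl(\eta+\tau(\xi-\eta)\bigr)\,d\tau .
\end{equation*}
The factor $\xi_{k}-\eta_{k}$ becomes a derivative on $g$; on the relevant support $p(\eta+\tau(\xi-\eta))=p(\eta)\,e_{\tau}(\xi,\eta)$ with $e_{\tau}$ and its frequency derivatives bounded uniformly in $\tau$ and in $\mu\le\alpha$ (as $p$ varies slowly at the scale $|\eta|$), so the factor $p(\eta)$ turns $f$ into $\Lambda^{\mu}\bSS f$, and what remains, $\int_{0}^{1}n_{k}(\eta+\tau(\xi-\eta))\,e_{\tau}(\xi,\eta)\,d\tau$, is a Coifman--Meyer symbol of order $0$; this region therefore contributes at most $C\Vert\nabla g\Vert_{L^{r_{1}}}\Vert\Lambda^{\mu}\bSS f\Vert_{L^{\tilde r_{1}}}$. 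On the remaining regions (high-frequency $g$ and the diagonal) one has $|\xi|,|\eta|\les|\xi-\eta|$ on the relevant support, whence by monotonicity of $\psi$,
\begin{equation*}
|M(\xi)-M(\eta)|\le|M(\xi)|+|M(\eta)|\le\psi(|\xi|)+\psi(|\eta|)\les\psi(|\xi-\eta|) ,
\end{equation*}
so the whole symbol difference is charged to $g$, turning it into $\Lambda^{\mu+1}\bSS g$ while $f$ is left alone, and a Coifman--Meyer estimate gives the contribution $C\Vert\Lambda^{\mu+1}\bSS g\Vert_{L^{r_{2}}}\Vert f\Vert_{L^{\tilde r_{2}}}$, the constraint $r_{2}<\infty$ entering through the square-function bound on the high--high output. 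The interaction of three low frequencies (all in the unit ball) is immediate since $M$ is bounded there. Summing the contributions yields the assertion.

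I expect the main obstacle to be the two middle steps together: recognizing the factorization $\partial_{k}M=n_{k}p$ with $n_{k}$ an $L^{q}$-multiplier is precisely what lets the derivative on $g$ and the operator $\Lambda^{\mu}\bSS$ on $f$ separate cleanly, and it is this feature of the inhomogeneous cutoff $(1+|\xi|^{2})^{-\alpha/2}$ that makes the lemma true in the stated form (rather than only with $\Vert\Lambda^{\mu+2-\alpha}g\Vert$ on the right); after that, the Coifman--Meyer bookkeeping must be carried out with some care near the origin, where the effective order of the symbols $M$ and $\nabla M$ drops from $\mu+2$ and $\mu+1$ to $\mu+2-\alpha$ and $\mu+1-\alpha$, this low-frequency case analysis being of the same nature as, and no harder than, the one in the proof of Lemma~\ref{L01}.
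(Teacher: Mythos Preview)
Your approach is essentially the paper's: both split the commutator into three frequency regions via cutoffs in $|\xi|/|\eta|$ and reduce each piece to a Coifman--Meyer symbol check. Your factorization $\partial_{k}M=n_{k}\,p$ is a clean repackaging of exactly what the paper does in its region $A_{3}$, where it writes the difference via $\phi(t)=M(\xi+t\eta)$ and computes $\phi'(t)$ explicitly; the two are the same computation. The one genuine execution difference is the diagonal: the paper handles its $A_{2}$ by complex interpolation (following \cite{KP}) and omits the details, whereas you absorb the diagonal into the high-$g$ piece using the monotonicity of $\psi$ and then appeal to the square-function bound for the high--high paraproduct. Your route is more elementary and avoids the interpolation machinery, but you should be explicit that after dividing out $\psi(|\xi-\eta|)$ the remaining symbol still needs its \emph{derivative} bounds verified (not just the pointwise bound you wrote), and that the high--high summation is where the restriction $r_{2}<\infty$ is actually used---you note this, but it is the one place a referee would ask for a line or two more.
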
 \colb \par\heyu{ D3o wV BdxQ m6Cv LaAgxi Jt E sSl ZFw DoY P2 nRYb CdXR z5HboV TU 8 NPg NVi WeX GV QZ7b jOy1 LRy9fa j9 n 2iE 1S0 mci 0Y D3Hg UxzL atb92M hC p ZKL JqH TSF RM n3KV kpcF LUcF0X 66 i vdq 01c Vqk oQ qu1u 2Cpi p5EV7A gM} \par\heyu{ O Rcf ZjL x7L cv 9lXn 6rS8 WeK3zT LD P B61 JVW wMi KE uUZZ 4qiK 1iQ8N0 83 2 TS4 eLW 4ze Uy onzT Sofn a74RQV Ki u 9W3 kEa 3gH 8x diOh AcHs IQCsEt 0Q i 2IH w9v q9r NP lh1y 3wOR qrJcxU 4i 5 5ZH TOo GP0 zE qlB3} \begin{proof}[Proof of Lemma~\ref{L05} (sketch)]  We follow the strategy from \cite{KP} (cf.~also \cite{KWZ}) and consider the commutator in three regions defined by the supports of $\Phi_k$ below. Namely, we write   \begin{align}    \begin{split}    [\Lambda^{\mu}\SS\partial_{j}, g]f    &=    c_{0}\sum_{k=1}^{3}\iint e^{ix(\xi+\eta)} \left(\frac{|\xi + \eta|^{\mu}(\xi_{1} + \eta_{1})(\xi_{j} + \eta_{j})}{(1+|\xi + \eta|^{2})^{\alpha/2}} - \frac{|\xi|^{\mu}\xi_{1}\xi_{j}}{(1+|\xi|^{2})^{\alpha/2}}\right)     \\&      \indeq\indeq\indeq\indeq\indeq\indeq   \indeqtimes   \hat{f}(\xi)\hat{g}(\eta) \Phi_{k}\left(\frac{|\xi|}{|\eta|}\right)\,d\eta\,d\xi   \\&=   c_{0}\sum_{k=1}^{3}\iint e^{ix(\xi+\eta)} A_{k}(\xi, \eta)\,d\eta\,d\xi   ,   \label{EQ56}   \end{split}   \end{align} where $\Phi_{k}\colon\mathbb R \rightarrow [0,1]$ are $C^{\infty}$ cut-off functions such that   \begin{equation}   \sum_{k=1}^{3}\Phi_{k} =1   \inon{on $[0,\infty)$}   \llabel{EQ57}   \end{equation} with   \begin{equation}   \supp{\Phi_{1}} \subseteq [-1/2,1/2]   \comma   \supp{\Phi_{2}} \subseteq [1/4,3]   \comma   \supp{\Phi_{3}} \subseteq [2,\infty]   \llabel{EQ58}   \end{equation} and   \begin{equation}   A_{k}(\xi, \eta)=\left(\frac{|\xi + \eta|^{\mu}(\xi_{1} + \eta_{1})(\xi_{j} + \eta_{j})}{(1+|\xi + \eta|^{2})^{\alpha/2}} - \frac{|\xi|^{\mu}\xi_{1}\xi_{j}}{(1+|\xi|^{2})^{\alpha/2}}\right)\hat{f}(\xi)\hat{g}(\eta)\Phi_{k}\left(\frac{|\xi|}{|\eta|}\right)   .   \llabel{EQ59}   \end{equation} Thus, the commutator \eqref{EQ56} may be rewritten as    \begin{align}   \begin{split}   [\Lambda^{s-1}\SS\partial_{j}, u_j]\rho   =   \sum_{k=1}^{3}\iint e^{ix(\xi+\eta)} A_{k}(\xi, \eta)\,d\eta\,d\xi   .   \llabel{EQ60}   \end{split}   \end{align} We write $A_{1}$ as    \begin{align}   \begin{split}   A_{1}(\xi, \eta)   &=   \left(\frac{|\xi + \eta|^{\mu}(\xi_{1} + \eta_{1})(\xi_{j} + \eta_{j})(1+|\eta|^2)^{\alpha/2}}{(1+|\xi + \eta|^{2})^{\alpha/2}|\eta|^{\mu+2}}    -   \frac{|\xi|^{\mu}\xi_{1}\xi_{j}(1+|\eta|^2)^{\alpha/2}}{(1+|\xi|^{2})^{\alpha/2}|\eta|^{\mu+2}}\right)   \\&\indeq      \times\hat{f}(\xi) (\Lambda^{\mu+1}\bSS g)\hat~(\eta) \Phi_{1}\left(\frac{|\xi|}{|\eta|}\right)   \\&=   \sigma_{1}(\xi, \eta) \hat{f}(\xi) (\Lambda^{\mu +1}\bSS g)\hat~(\eta)   .   \llabel{EQ62}   \end{split}   \end{align} It is elementary to show that   \begin{align}   \begin{split}   |\sigma_{1}|   \leq   C   ,   \llabel{EQ63}   \end{split}   \end{align} as well as more generally   \begin{align}   \begin{split}   |\partial^{\alpha}\partial^{\beta}\sigma_{1}|   \leq   \frac{       C(|\alpha|, |\beta|)   }{   (|\xi|+|\eta|)^{|\alpha|+|\beta|}   }   \comma   \alpha,\beta \in \mathbb N_{0}^{2}   .   \llabel{EQ64}   \end{split}   \end{align} By the Coifman-Meyer theorem, we get   \begin{align}   \begin{split}   \left\Vert  \iint e^{ix(\xi+\eta)} A_{1}(\xi, \eta)\,d\eta\,d\xi \right\Vert_{L^q}   \les   \Vert  f \Vert_{L^{r_{1}}}   \Vert \Lambda^{\mu+1}\bSS g \Vert_{L^{\tilde r_{1}}}   ,   \llabel{EQ65}   \end{split}   \end{align} where $1/q=1/r_{1}+1/\tilde r_{1}$. For $A_{3}$, we write    \begin{align}   \begin{split}   A_{3}(\xi, \eta)   &=   \left(\frac{|\xi + \eta|^{\mu}(\xi_{1} + \eta_{1})(\xi_{j} + \eta_{j})}{(1+|\xi + \eta|^{2})^{\alpha/2}} - \frac{|\xi|^{\mu}\xi_{1}\xi_{j}}{(1+|\xi|^{2})^{\alpha/2}}\right)\hat{f}(\xi)\hat{g}(\eta)\Phi_{3}\left(\frac{|\xi|}{|\eta|}\right)   \\&=   \frac{(1+|\xi|^2)^{\alpha/2}}{|\eta||\xi|^{\mu+1}}   \left(\frac{|\xi + \eta|^{\mu}(\xi_{1} + \eta_{1})(\xi_{j} + \eta_{j})}{(1+|\xi + \eta|^{2})^{\alpha/2}} - \frac{|\xi|^{\mu}\xi_{1}\xi_{j}}{(1+|\xi|^{2})^{\alpha/2}}\right)    (\Lambda^{\mu}\bSS f)\hat~(\xi) (\nabla g)\hat~(\eta)\Phi_{3}\left(\frac{|\xi|}{|\eta|}\right)    \\&=    \sigma_{3}(\xi,\eta)(\Lambda^{\mu}\bSS f)\hat~(\xi) (\nabla g)\hat~(\eta)\Phi_{3}\left(\frac{|\xi|}{|\eta|}\right)   .   \llabel{EQ66}   \end{split}   \end{align} Setting   \begin{align}   \begin{split}   \phi(t)   =   \frac{|\xi + t\eta|^{\mu}(\xi_{1} + t\eta_{1})(\xi_{j} + t\eta_{j})}{(1+|\xi + t\eta|^{2})^{\alpha/2}}      \comma t\in[0,1]     \llabel{EQ666}     \end{split}    \end{align} we have   \begin{align}   \begin{split}   \phi'(t)   &=   \frac{\mu|\xi + t\eta|^{\mu-2}(\xi + t\eta)\eta(\xi_{1} + t\eta_{1})(\xi_{j} + t\eta_{j})}{(1+|\xi + t\eta|^{2})^{\alpha/2}}   +   \frac{|\xi + t\eta|^{\mu}\eta_{1}(\xi_{j} +t \eta_{j})}{(1+|\xi + t\eta|^{2})^{\alpha/2}}   \\&\indeq   +   \frac{|\xi + t\eta|^{\mu}\eta_{j}(\xi_{1} +t \eta_{1})}{(1+|\xi + t\eta|^{2})^{\alpha/2}}   +   \frac{\alpha|\xi + t\eta|^{\mu}(\xi_{1} + t\eta_{1})(\xi_{j} + t\eta_{j})(\xi + t\eta)\eta}{(1+|\xi + t\eta|^{2})^{\alpha/2+1}}   .   \llabel{EQ667}   \end{split}   \end{align} Note that in the region $\Phi_{3}>0$, we have $|\xi|\geq 2|\eta|$. Therefore,   \begin{align}   \begin{split}   |\sigma_{3}|   \leq   C   ,   \llabel{EQ668}   \end{split}   \end{align} as well as more generally   \begin{align}   \begin{split}   |\partial^{\alpha}\partial^{\beta}\sigma_{3}|   \leq   \frac{     C(|\alpha|, |\beta|)   }{     (|\xi|+|\eta|)^{|\alpha|+|\beta|}   }   \comma   \alpha,\beta \in \mathbb N_{0}^{2}   .   \llabel{EQ669}   \end{split}   \end{align} By the Coifman-Meyer theorem, we get   \begin{align}   \begin{split}   \left\Vert  \iint e^{ix(\xi+\eta)} A_{3}(\xi, \eta)\,d\eta\,d\xi \right\Vert_{L^q}   \les   \Vert  \nabla g \Vert_{L^{q_1}}   \Vert \Lambda^{\mu}\bSS f\Vert_{L^{q_{2}}}   ,   \llabel{EQ301}   \end{split}   \end{align} where $1/q=1/q_1+1/q_2$. For $A_{2}$, we use the complex interpolation inequality. Since the argument is the same as in \cite{KP}, we omit the proof. By combining the estimates for $A_1$, $A_{2}$, and $A_{3}$, we get   \begin{align}   \begin{split}    \Vert   [\Lambda^{\mu}\SS\partial_{j}, g]f   \Vert_{L^{q}}   \les   \Vert \nabla g\Vert_{L^{q_{1}}}\Vert \Lambda^{\mu}\bSS  f\Vert_{L^{q_{2}}}   + \Vert \Lambda^{\mu}\bSS \nabla g\Vert_{L^{q_{3}}} \Vert f \Vert_{L^{q_4}}   ,   \end{split}   \llabel{EQ69}   \end{align} where the parameters $q_1, q_2, q_3, q_4\in [q,\infty]$  satisfy $1/q = 1/q_{1} + 1/q_{2} =1/q_{3} + 1/q_{4} $  and  the implicit constant depends on  $q_1$, $q_2$, $q_3$, $q_4$, and $\mu$. \end{proof} \par\heyu{ lkwG GRn7TO oK f GZu 5Bc zGK Fe oyIB tjNb 8xfQEK du O nJV OZh 8PU Va RonX BkIj BT9WWo r7 A 3Wf XxA 2f2 Vl XZS1 Ttsa b4n6R3 BK X 0XJ Tml kVt cW TMCs iFVy jfcrze Jk 5 MBx wR7 zzV On jlLz Uz5u LeqWjD ul 7 OnY ICG} \begin{proof}[Proof of Lemma~\ref{L04}] Since $s>1$, we have $W^{s,q}({\mathbb R}^2\subseteq L^\infty({\mathbb R}^2)$, and thus   \begin{equation}   \rho_0   \in   L^{\qq}   \comma    \qq\in [q,\infty]   .   \llabel{EQ24}   \end{equation} Using the $L^{\qq}$ conservation property for the density equation \eqref{EQ02}, we get   \begin{equation}   \Vert \rho(t)\Vert_{L^{\qq}}   \le    \Vert \rho_0\Vert_{L^{\qq}}   \les 1   \comma    \qq\in [q,\infty]   ,   \label{EQ25}   \end{equation} where we assume that all the constants depend on $\Vert \rho_0\Vert_{L^q}$ and $\Vert \omega_0\Vert_{L^{q}}$. In order to estimate $\Vert \zeta \Vert_{L^q}$, we multiply the equation \eqref{EQ10} with $|\zeta|^{q-2}\zeta$ and integrate obtaining   \begin{align}   \begin{split}   &\frac{1}{q} \frac{d}{dt}\Vert \zeta \Vert_{L^q}^{q} + \int (\Lambda^{\alpha}\zeta)  | \zeta |^{q-2}\zeta\,dx   \\&\indeq   =   -\int N \rho | \zeta |^{q-2}\zeta\,dx    +   \int [\SS, u\cdot \nabla]\rho | \zeta |^{q-2}\zeta\,dx   =   I_1 + I_2   .                  \label{EQ26}   \end{split}   \end{align} For $I_1$, we have    \begin{equation}   I_1    \leq   \Vert N\rho \Vert_{L^q}    \Vert |\zeta|^{q-2}\zeta \Vert_{L^{q/(q-1)}}   \les    \Vert \rho \Vert_{L^q}    \Vert \zeta \Vert_{L^q}^{q-1}   \les   \Vert \zeta \Vert_{L^q}^{q-1}   ,   \label{EQ27}   \end{equation} where we used H\"{o}lder's inequality and \eqref{EQ25}. Since $u$ is divergence-free, we may rewrite the  commutator as   \begin{align}   \begin{split}   [\SS, u \cdot \nabla]\rho   =   \SS u_{j}\partial_{j}\rho   -   u_{j}\partial_{j}\SS\rho   =   (\partial_{j}\SS)(u_{j}\rho)   -   u_{j}(\partial_{j}\SS)\rho   =   [\partial_{j} \SS, u_{j}]\rho   .   \end{split}   \llabel{EQ28}   \end{align} Observe that $\partial_{j}\SS$ is an operator of order $2-\alpha$. Thus, by Lemma~\ref{L05} with $\mu=0$, we have   \begin{align}   \begin{split}   I_2   & \leq    \Vert [\SS, u\cdot \nabla]\rho\Vert_{L^{q}} \Vert \zeta \Vert_{L^{q}}^{q-1}   =    \Vert [\partial_{j}\SS, u_j]\rho\Vert_{L^{q}} \Vert \zeta \Vert_{L^{q}}^{q-1}   \\& \les   (\Vert \bSS \rho \Vert_{L^{a_1}}\Vert \nabla u \Vert_{L^{b_1}}+\Vert \rho \Vert_{L^{a_2}} \Vert \bSS \nabla u\Vert_{L^{b_2}} )\Vert \zeta \Vert_{L^{q}}^{q-1}   \\&   \les    (\Vert \bSS \rho \Vert_{L^{a_1}}   \Vert \omega \Vert_{L^{b_1}}   +\Vert \rho \Vert_{L^{a_2}}    \Vert \bSS \omega \Vert_{L^{b_2}} )   \Vert \zeta \Vert_{L^{q}}^{q-1}   ,   \end{split}   \llabel{EQ29}   \end{align} with the Lebesgue exponents above satisfying $1/q = 1/a_{1} + 1/b_{1} =1/a_{2} + 1/b_{2}$ and  $a_{1},a_{2},b_{1},b_{2} \in (q,\infty)$. Therefore, choosing $a_1=a_2=q/(\alpha-1)$ and $b_1=b_2=q/(2-\alpha)$,   \begin{align}   \begin{split}   I_2   & \les   (   \Vert \bSS \rho \Vert_{L^{q/(\alpha-1)}}   \Vert \omega \Vert_{L^{q/(2-\alpha)}}   +\Vert \rho \Vert_{L^{q/(\alpha-1)}}    \Vert \bSS \omega \Vert_{L^{q/(2-\alpha)}} )   \Vert \zeta \Vert_{L^{q}}^{q-1}   .   \end{split}   \llabel{EQ30}   \end{align} Now, by the fractional Gagliardo-Nirenberg inequality applied to $|\zeta|^{q/2}$, we have   \begin{equation}   \Vert \zeta\Vert_{L^{r}}   \les   \Vert \zeta\Vert_{L^{q}}^{(r\alpha-2r+2q)/\alpha r}   \Vert \Lambda^{\alpha/2}(|\zeta|^{q/2})\Vert_{L^2}^{4(r-q)/\alpha r q}   \comma    q\leq r\leq 2q/(2-\alpha)   ,   \label{EQ31}   \end{equation} from where   \begin{equation}   \Vert \zeta\Vert_{L^{q/(2-\alpha)}}   \les   \Vert \zeta\Vert_{L^{q}}^{(2-\alpha)/\alpha}   \Vert \Lambda^{\alpha/2}(|\zeta|^{q/2})\Vert_{L^2}^{4(\alpha-1)/\alpha q}   .   \llabel{EQ32}   \end{equation} Also, using the triangle inequality   \begin{align}   \begin{split}   \Vert \omega \Vert_{L^{q/(2-\alpha)}}   &   \leq   \Vert \zeta \Vert_{L^{q/(2-\alpha)}}   +   \Vert \bSS\rho \Vert_{L^{q/(2-\alpha)}}   \les   \Vert \zeta \Vert_{L^{q/(2-\alpha)}}   +   \Vert \rho \Vert_{L^{q/(2-\alpha)}}   \les   \Vert \zeta \Vert_{L^{q/(2-\alpha)}}   +   1   \\&   \les   \Vert \zeta\Vert_{L^{q}}^{(2-\alpha)/\alpha}   \Vert \Lambda^{\alpha/2}(|\zeta|^{q/2})\Vert_{L^2}^{4(\alpha-1)/\alpha q}   +   1   \end{split}   \notag   \llabel{EQ34}   \end{align} we get   \begin{align}   \begin{split}   I_2   &\les   (   \Vert \omega \Vert_{L^{q/(2-\alpha)}}   +   \Vert \bSS \omega \Vert_{L^{q/(2-\alpha)}} )   \Vert \zeta \Vert_{L^{q}}^{q-1}   \\&   \les   \Vert \omega\Vert_{L^{q/(2-\alpha)}}\Vert \zeta\Vert_{L^{q}}^{q-1}   \\&   \les   \Vert \zeta\Vert_{L^{q}}^{(2-\alpha)/\alpha+q-1}   \Vert \Lambda^{\alpha/2}(|\zeta|^{q/2})\Vert_{L^2}^{4(\alpha-1)/\alpha q}   +   \Vert \zeta \Vert_{L^{q}}^{q-1}   .   \end{split}   \label{EQ35}   \end{align} Replacing \eqref{EQ27} and \eqref{EQ35} in \eqref{EQ26} and using \eqref{EQ20} on the coercive term, we obtain   \begin{align}   \begin{split}   &\frac{1}{q} \frac{d}{dt}\Vert \zeta \Vert_{L^q}^{q}    +     \frac{2}{q} \int_{\Omega} (\Lambda^{\alpha/2}(|\zeta|^{q/2}))^2 \,dx   \les   \Vert \zeta \Vert_{L^q}^{q-1}   +   \Vert \zeta\Vert_{L^{q}}^{(2-\alpha)/\alpha+q-1}   \Vert \Lambda^{\alpha/2}(|\zeta|^{q/2})\Vert_{L^2}^{4(\alpha-1)/\alpha q}   .   \llabel{EQ36}   \end{split}   \end{align} Since $4(\alpha-1)/\alpha q<2$, we may use Young's inequality with exponents $\alpha q/(\alpha q-2\alpha+2)$ and $\alpha q/2(\alpha-1)$  to get   \begin{align}   \begin{split}   & \frac{d}{dt}\Vert \zeta \Vert_{L^q}^{q}    +     \int_{\Omega} (\Lambda^{\alpha/2}(|\zeta|^{q/2}))^2 \,dx   \les   \Vert \zeta \Vert_{L^q}^{q-1}   +   \Vert \zeta \Vert_{L^q}^{q}   ,   \end{split}   \label{EQ37}   \end{align} where the implicit constant depends on the initial data. The inequality \eqref{EQ21} then follows by applying the Gronwall inequality, while \eqref{EQ22} is a consequence of \eqref{EQ21} and the triangle inequality. Finally, \eqref{EQ23} holds by using \eqref{EQ21} in \eqref{EQ37} and integrating. \end{proof} \par\heyu{ G9i Ry bTsY JXfr Rnub3p 16 J BQd 0zQ OkK ZK 6DeV gpXR ceOExL Y3 W KrX YyI e7d qM qanC CTjF W71LQ8 9m Q w1g Asw nYS Me WlHz 7ud7 xBwxF3 m8 u sa6 6yr 0nS ds Ywuq wXdD 0fRjFp eL O e0r csI uMG rS OqRE W5pl ybq3rF rk} It is important that we may bootstrap the above statement and obtain the conclusion   on the behavior of the $L^{\bar q}$ norm of $\zeta$, and thus of $\omega$, for all $\bar q>q$. \par\heyu{ 7 YmL URU SSV YG ruD6 ksnL XBkvVS 2q 0 ljM PpI L27 Qd ZMUP baOo Lqt3bh n6 R X9h PAd QRp 9P I4fB kJ8u ILIArp Tl 4 E6j rUY wuF Xi FYaD VvrD b2zVpv Gg 6 zFY ojS bMB hr 4pW8 OwDN Uao2mh DT S cei 90K rsm wa BnNU} \cole \begin{Lemma} \label{L06} Assume that $u_0,\rho_0\in W^{s,q}(\mathbb R^2)$, where $s\ge 1$ and $q\in(2,\infty)$. Then for every $\bar q\in (q,\infty)$ and $t_0>0$ we have   \begin{equation}   \Vert \zeta \Vert_{L^{\bar q}} \leq Ce^{Ct}   \comma t\ge t_0   \llabel{EQ38}   \end{equation} and   \begin{equation}   \Vert \omega \Vert_{L^{\bar q}} \leq Ce^{Ct}   \comma t\ge t_0   ,   \label{EQ39}   \end{equation} where $C = C(\Vert \omega_{0} \Vert_{L^q}, \Vert \rho_{0} \Vert_{L^q}, \bar q, t_0)$. Moreover, we have   \begin{equation}   \int_{0}^{t} \Vert \Lambda^{\alpha/2} (|\zeta^{\bar q/2}|) \Vert_{L^{2}}^{2} \,ds   \leq   C e^{C t}   ,   \llabel{EQ40}   \end{equation} for all $t\ge0$ where $C = C(\Vert \omega_{0} \Vert_{L^q}, \Vert \rho_{0} \Vert_{L^q}, \bar q, t_0)$. \end{Lemma}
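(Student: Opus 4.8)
The plan is to bootstrap Lemma~\ref{L04} by iterating its $L^{q}$-energy argument over a geometrically increasing chain of exponents, the gain of integrability at each step being supplied by the fractional dissipation. Fix the target $\bar q\in(q,\infty)$ and choose exponents $q=q_{0}<q_{1}<\dots<q_{M}=\bar q$ with $q_{i}/q_{i-1}<2/(2-\alpha)$ for every $i$, which is possible since $\bigl(2/(2-\alpha)\bigr)^{M}\ge\bar q/q$ once $M$ is large (with $M$ depending only on $\bar q,q,\alpha$). Fix times $0<\tau_{1}<\dots<\tau_{M}<t_{0}$ and set $\tau_{0}:=0$. The induction hypothesis at stage $i$ is that $\|\zeta(t)\|_{L^{q_{i}}}\le Ce^{Ct}$ for $t\ge\tau_{i}$ together with $\int_{\tau_{i}}^{t}\|\Lambda^{\alpha/2}(|\zeta|^{q_{i}/2})\|_{L^{2}}^{2}\,ds\le Ce^{Ct}$, where $C=C(\|\omega_{0}\|_{L^{q}},\|\rho_{0}\|_{L^{q}},\bar q,t_{0})$; the base case $i=0$ is Lemma~\ref{L04}. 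Stage $M$ is then exactly the $\zeta$-bound and space-time bound of Lemma~\ref{L06} for $t\ge\tau_{M}$, hence for $t\ge t_{0}$, and \eqref{EQ39} follows from $\omega=\zeta+\SS\rho$, the $L^{\bar q}$-boundedness of $\SS$, and $\|\rho(t)\|_{L^{\bar q}}=\|\rho_{0}\|_{L^{\bar q}}\lesssim1$ (note $\rho_{0}\in W^{s,q}\subseteq L^{q}\cap L^{\infty}$ since $s\ge1$ and $q>2$).

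For the inductive step, assume stage $i-1$. Since $\|\Lambda^{\alpha/2}(|\zeta|^{q_{i-1}/2})\|_{L^{2}}^{2}$ is integrable on $[\tau_{i-1},t_{0}]$ with integral at most $Ce^{Ct_{0}}$, a mean-value argument produces $\tau_{i}\in(\tau_{i-1},t_{0})$ at which $\|\Lambda^{\alpha/2}(|\zeta(\tau_{i})|^{q_{i-1}/2})\|_{L^{2}}$ is bounded by a constant depending only on $t_{0}$ and the stage-$(i-1)$ data; the fractional Gagliardo--Nirenberg inequality used as in \eqref{EQ31}, with the two exponents there taken to be $q_{i-1}\le q_{i}\le 2q_{i-1}/(2-\alpha)$, then gives $\|\zeta(\tau_{i})\|_{L^{q_{i}}}\le C$. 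Starting from this datum at time $\tau_{i}$, I would rerun the proof of Lemma~\ref{L04} verbatim with $q$ replaced by $r:=q_{i}$: multiply \eqref{EQ10} by $|\zeta|^{r-2}\zeta$ and integrate; bound the dissipative term below by $(2/r)\|\Lambda^{\alpha/2}(|\zeta|^{r/2})\|_{L^{2}}^{2}$ via Lemma~\ref{L03}; bound the $N\rho$ term by $\|N\rho\|_{L^{r}}\|\zeta\|_{L^{r}}^{r-1}\lesssim\|\rho_{0}\|_{L^{r}}\|\zeta\|_{L^{r}}^{r-1}$ via Lemma~\ref{L01} and the $L^{r}$-conservation of $\rho$; and, writing $[\SS,u\cdot\nabla]\rho=[\partial_{j}\SS,u_{j}]\rho$, bound the commutator with Lemma~\ref{L05} at $\mu=0$, using $\|\nabla u\|_{L^{b}}\lesssim\|\omega\|_{L^{b}}$, $\omega=\zeta+\SS\rho$, the $L^{p}$-boundedness of $\bSS$ and $\SS$, and \eqref{EQ31} applied to $|\zeta|^{r/2}$, exactly as in \eqref{EQ31}--\eqref{EQ35}.

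This yields
\begin{equation*}
\frac{1}{r}\frac{d}{dt}\|\zeta\|_{L^{r}}^{r}+\frac{2}{r}\|\Lambda^{\alpha/2}(|\zeta|^{r/2})\|_{L^{2}}^{2}\lesssim\|\zeta\|_{L^{r}}^{r-1}+\|\zeta\|_{L^{r}}^{(2-\alpha)/\alpha+r-1}\,\|\Lambda^{\alpha/2}(|\zeta|^{r/2})\|_{L^{2}}^{4(\alpha-1)/\alpha r}.
\end{equation*}
Since $r\ge q>2>2(\alpha-1)/\alpha$, the exponent $4(\alpha-1)/\alpha r$ is strictly below $2$, so Young's inequality absorbs the dissipative factor, and the identity $(2-\alpha)/\alpha+r-1=(\alpha r-2\alpha+2)/\alpha$ makes the surviving power of $\|\zeta\|_{L^{r}}$ equal to $r$, giving $\tfrac{d}{dt}\|\zeta\|_{L^{r}}^{r}+\|\Lambda^{\alpha/2}(|\zeta|^{r/2})\|_{L^{2}}^{2}\lesssim\|\zeta\|_{L^{r}}^{r}+\|\zeta\|_{L^{r}}^{r-1}$, i.e.\ \eqref{EQ37} with $q$ replaced by $r$. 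Gronwall's inequality on $[\tau_{i},\infty)$, followed by integration, closes stage $i$; the constants remain of the form $C(\|\omega_{0}\|_{L^{q}},\|\rho_{0}\|_{L^{q}},\bar q,t_{0})$ since every $\rho$-norm entering is controlled by $\|\rho_{0}\|_{L^{q}}$ and $\|\rho_{0}\|_{L^{\infty}}\lesssim\|\rho_{0}\|_{W^{s,q}}$, while $q_{i}$ and $M$ depend only on $\bar q,q,\alpha$. Recording $\|\omega(t)\|_{L^{q_{i}}}\le\|\zeta(t)\|_{L^{q_{i}}}+\|\SS\rho(t)\|_{L^{q_{i}}}\lesssim\|\zeta(t)\|_{L^{q_{i}}}+1$, after $M$ stages both \eqref{EQ39} and the stated bound on $\|\zeta\|_{L^{\bar q}}$ follow.

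The two genuinely substantive points, as opposed to the lengthy but routine re-derivation of the energy inequality at a general exponent, are the following. First, the estimate cannot be initialized at $t=0$: the datum $\zeta_{0}=\omega_{0}-\SS\rho_{0}$ lies in $L^{q}$ but need not lie in $L^{\bar q}$, which is exactly why a positive waiting time $t_{0}$ is unavoidable and why the gain of integrability must be extracted from the dissipation through the space-time bound; correspondingly one must check that the iteration does not stall, i.e.\ that the improvement ratio $2/(2-\alpha)$ strictly exceeds $1$, which holds since $\alpha<2$, so that finitely many steps reach any prescribed $\bar q$. Second, one must verify that the sole structural constraint of the energy estimate, $4(\alpha-1)/\alpha r<2$, only becomes easier as $r$ grows, so that no new obstruction appears at higher integrability; this is immediate from $r\ge q>2$. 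A minor technical point, handled by mollifying \eqref{EQ10} and passing to the limit in the standard way, is to justify the differential inequality at the intermediate initial times $\tau_{i}$, where a priori only $\zeta(\tau_{i})\in L^{q_{i}}$ is known.
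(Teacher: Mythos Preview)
Your proposal is correct and follows essentially the same bootstrap argument as the paper: use the space--time dissipation bound at exponent $q_{i-1}$ and a mean-value argument to locate a time at which \eqref{EQ31} upgrades $\zeta$ to $L^{q_i}$, then rerun the energy estimate of Lemma~\ref{L04} at the new exponent, and iterate finitely many times up to $\bar q$. Your write-up is in fact more explicit than the paper's (which compresses the iteration into a single sentence), and your closing remarks correctly isolate the two structural points---the necessity of the waiting time $t_0>0$ and the monotonicity of the Young-inequality constraint in $r$---that make the induction go through.
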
 \colb \par\heyu{ sHe6 RpIq1h XF N Pm0 iVs nGk bC Jr8V megl 416tU2 nn o llO tcF UM7 c4 GC8C lasl J0N8Xf Cu R aR2 sYe fjV ri JNj1 f2ty vqJyQN X1 F YmT l5N 17t kb BTPu F471 AH0Fo7 1R E ILJ p4V sqi WT TtkA d5Rk kJH3Ri RN K ePe sR0} {\begin{proof}[Proof of Lemma~\ref{L05}] We first prove that the statement holds for all $\bar q\in [q,2q/(2-\alpha)]$, and the rest follows by an iteration argument. Using \eqref{EQ23} with $t=t_0=1$,  we obtain   \begin{equation}   \left|   \left\{   t\in(0,t_0]:   \Vert \Lambda^{\alpha/2} (|\zeta^{\bar q/2}(t)|) \Vert_{L^{2}}^{2}    \le C   \right\}   \right|   \ge \frac{1}{C}   \llabel{EQ41}   \end{equation} for $C>0$ sufficiently large. It is easy to deduce then that there exists $\bar t\in (0,t_0)$ such that   \begin{equation}   \Vert \Lambda^{\alpha/2} (|\zeta^{\bar q/2}|) (\bar t) \Vert_{L^{2}}   \le C   .   \llabel{EQ42}   \end{equation}   Since also   \begin{equation}   \Vert \zeta(\bar t)\Vert_{L^q}   \leq   C   ,   \llabel{EQ43}   \end{equation} we get by \eqref{EQ31}   \begin{equation}   \Vert \zeta(\bar t)\Vert_{L^{\bar q}}   \leq   C   \llabel{EQ44}   \end{equation} since $q\leq \bar q\leq 2q/(2-\alpha)$. Applying Lemma~\ref{L04} but with $q$ replaced with $\bar q$, we obtain the statement for $\bar q$ in this range. Continuing by induction, we get then the conclusion for all $\bar q\in[q,\infty)$, and the lemma is established. \end{proof} \par\heyu{ xqF qn QjGU IniV gLGCl2 He 7 kmq hEV 4PF dC dGpE P9nB mcvZ0p LY G idf n65 qEu Df Mz2v cq4D MzN6mB FR t QP0 yDD Fxj uZ iZPE 3Jj4 hVc2zr rc R OnF PeO P1p Zg nsHA MRK4 ETNF23 Kt f Gem 2kr 5gf 5u 8Ncu wfJC av6SvQ 2n} \par\heyu{ 1 8P8 RcI kmM SD 0wrV R1PY x7kEkZ Js J 7Wb 6XI WDE 0U nqtZ PAqE ETS3Eq NN f 38D Ek6 NhX V9 c3se vM32 WACSj3 eN X uq9 GhP OPC hd 7v1T 6gqR inehWk 8w L oaa wHV vbU 49 02yO bCT6 zm2aNf 8x U wPO ilr R3v 8R cNWE} \par\heyu{ k7Ev IAI8ok PA Y xPi UlZ 4mw zs Jo6r uPmY N6tylD Ee e oTm lBK mnV uB B7Hn U7qK n353Sn dt o L82 gDi fcm jL hHx3 gi0a kymhua FT z RnM ibF GU5 W5 x651 0NKi 85u8JT LY c bfO Mn0 auD 0t vNHw SAWz E3HWcY TI d 2Hh XML} \par\heyu{ iGi yk AjHC nRX4 uJJlct Q3 y Loq i9j u7K j8 4EFU 49ud eA93xZ fZ C BW4 bSK pyc f6 nncm vnhK b0HjuK Wp 6 b88 pGC 3U7 km CO1e Y8jv Ebu59z mG Z sZh 93N wvJ Yb kEgD pJBj gQeQUH 9k C az6 ZGp cpg rH r79I eQvT Idp35m wW} \startnewsection{The Sobolev persistence for $1< s \leq \alpha$}{sec04}  In this section, we prove our main result, Theorem~\ref{T01}, in the case when $s\leq \alpha$. \par\heyu{ m afR gjD vXS 7a FgmN IWmj vopqUu xF r BYm oa4 5jq kR gTBP PKLg oMLjiw IZ 2 I4F 91C 6x9 ae W7Tq 9CeM 62kef7 MU b ovx Wyx gID cL 8Xsz u2pZ TcbjaK 0f K zEy znV 0WF Yx bFOZ JYzB CXtQ4u xU 9 6Tn N0C GBh WE FZr6} \begin{proof}[Proof of Theorem~\ref{T01} for $s\leq \alpha$] For $j=1,2$, we multiply the $j$-th velocity equation of \eqref{EQ01} with $|u_j|^{q-2}u_j$, integrate the resulting equation with respect to $x$, and sum for $j=1,2$ obtaining   \begin{align}   \begin{split}   &\frac{1}{q}     \frac{d}{dt}    \sum_{j=1}^2   \Vert u_j \Vert_{L^q}^{q}    +     \sum_{j=1}^{2}        \int (\Lambda^{\alpha}u_j) |u_j|^{q-2}u_j \,dx   =   -   \sum_{j=1}^{2}   \int \partial_{j} \pi \cdot |u_j|^{q-2}u_j \,dx   +   \sum_{j=1}^{2}   \int \rho e_{2} \cdot |u_j|^{q-2}u_j \,dx   \end{split}   \llabel{EQ45}   \end{align} since due to the divergence-free condition for $u$ we have $\int (u\cdot \nabla u_j)  |u_j|^{q-2}u_j \,dx=0$ for $j=1,2$. By Lemma~\ref{L03} and H\"{o}lder's inequality, we get   \begin{align}   \begin{split}   &\frac{1}{q}     \frac{d}{dt}    \sum_{j=1}^2   \Vert u_j \Vert_{L^q}^{q}    +  \frac{2}{q}\sum_{j=1}^{2} \Vert \Lambda^{\alpha/2} (|u_j|^{q/2})\Vert_{L^2}^2   \les   \Vert \nabla \pi \Vert_{L^q}\Vert u \Vert_{L^q}^{q-1}     +   \Vert u \Vert_{L^q}^{q-1}     \end{split}   \label{EQ46}   \end{align} where, as above, $q$ is considered fixed (i.e., the constants are allowed to depend on $q$). Using the Calder\'{o}n-Zygmund and Sobolev embedding theorems, we obtain   \begin{align}   \begin{split}   \Vert \nabla \pi \Vert_{L^q}   \les   \Vert  u \Vert_{L^{2q}}     \Vert \omega \Vert_{L^{2q}}    \les   \Vert u\Vert_{L^q}^{1-1/q}   \Vert \omega\Vert_{L^q}^{1/q}   \Vert \omega \Vert_{L^{2q}}    \les   C e^{Ct}   \Vert u\Vert_{L^q}^{1-1/q}   ,   \end{split}   \label{EQ47}   \end{align} where we also used Lemma~\ref{L06} in the last step. Applying \eqref{EQ47}  on the first term of the right hand side of \eqref{EQ46}  gives   \begin{align}   \begin{split}   &    \frac{d}{dt}    \sum_{j=1}^2   \Vert u_j \Vert_{L^q}^{q}    +     \sum_{j=1}^{2} \Vert \Lambda^{\alpha/2} (|u_j|^{q/2})\Vert_{L^2}^2   \les   e^{Ct}   \Vert u\Vert_{L^q}^{q-1/q}   +   \Vert u \Vert_{L^q}^{q-1}   \end{split}    \llabel{EQ48}   \end{align} and thus   \begin{equation}    \Vert u\Vert_{L^q}    \les    e^{C t}    \llabel{EQ68}   \end{equation} by the Gronwall inequality. \par\heyu{ 0rIg w2f9x0 fW 3 kUB 4AO fct vL 5I0A NOLd w7h8zK 12 S TKy 2Zd ewo XY PZLV Vvtr aCxAJm N7 M rmI arJ tfT dd DWE9 At6m hMPCVN UO O SZY tGk Pvx ps GeRg uDvt WTHMHf 3V y r6W 3xv cpi 0z 2wfw Q1DL 1wHedT qX l yoj GIQ} Next, we consider the $L^q$ norm of higher order derivatives. Applying $\Lambda^{s-1}$ to  \eqref{EQ10}, multiplying the resulting equation by  $|\Lambda^{s-1}\zeta |^{q-2}\Lambda^{s-1}\zeta$, and  integrating, we get   \begin{align}   \begin{split}   &\frac{1}{q}\frac{d}{dt}\Vert \Lambda^{s-1}\zeta \Vert_{L^q}^q    +   \int \Lambda^{\alpha}(\Lambda^{s-1}\zeta)| \Lambda^{s-1}\zeta |^{q-2}\Lambda^{s-1}\zeta \,dx           \\&\indeq   =   -   \int \Lambda^{s-1} (u \cdot \nabla \zeta) | \Lambda^{s-1}\zeta |^{q-2}\Lambda^{s-1}\zeta\,dx   +   \int \Lambda^{s-1}( [\SS, u \cdot \nabla]\rho) |\Lambda^{s-1}\zeta|^{q-2}\Lambda^{s-1} \zeta\,dx   \\&\indeq\indeq   -   \int \Lambda^{s-1} N \rho | \Lambda^{s-1}\zeta |^{q-2}\Lambda^{s-1}\zeta \,dx   \\&\indeq   =J_1 + J_2 + J_3   .   \end{split}   \label{EQ49}   \end{align} By  Lemma~\ref{L02},  we estimate   \begin{align}   \begin{split}   J_1   &=   -\int \bigl(\Lambda^{s-1} (u \cdot \nabla \zeta) - u\cdot \Lambda^{s-1}\nabla \zeta \bigr)|    \Lambda^{s-1}\zeta |^{q-2}\Lambda^{s-1}\zeta \,dx   \\&   \leq    \Vert \Lambda^{s-1} (u \cdot \nabla \zeta) - u\cdot \Lambda^{s-1}\nabla \zeta\Vert_{L^{q}} \Vert \Lambda^{s-1}\zeta \Vert_{L^q}^{q-1}     \\&    \leq   \sum_{j=1}^{2}   \Vert \partial_{j}\Lambda^{s-1} (u_j \zeta) - u_j \partial_{j} \Lambda^{s-1} \zeta\Vert_{L^{q}} \Vert \Lambda^{s-1}\zeta \Vert_{L^q}^{q-1}     ,   \end{split}   \label{EQ50}   \end{align} where we used the divergence-free condition and the triangle inequality in the last step. Therefore,   \begin{align}   \begin{split}   J_1   &\les   (     \Vert \zeta \Vert_{L^{r_1}}      \Vert \Lambda^{s-1} \omega \Vert_{L^{r_2}}     +     \Vert \Lambda u \Vert_{L^{r_{3}}}     \Vert \Lambda^{s-1}\zeta \Vert_{L^{r_4}}    ) \Vert \Lambda^{s-1}\zeta \Vert_{L^q}^{q-1}   \\& \les   \bigl(     \Vert \zeta \Vert_{L^{r_1}}( \Vert \Lambda^{s-1} \zeta \Vert_{L^{r_2}} + \Vert \Lambda^{s-1} \SS \rho \Vert_{L^{r_2}})       +     \Vert \omega \Vert_{L^{r_{3}}}     \Vert \Lambda^{s-1}\zeta \Vert_{L^{r_{4}}}   \bigr)    \Vert \Lambda^{s-1}\zeta \Vert_{L^q}^{q-1}   ,   \end{split}    \llabel{EQ135}   \end{align} for any   $r_1, r_2, r_3, r_4 \in (q,\infty)$  such that  $1/q=1/r_{1} + 1/r_{2} = 1/r_{3} + 1/r_{4}$. Choose $r_{1}=r_{2}=r_{3}=r_{4}=2q$ and note that   \begin{equation}    \Vert \Lambda^{s-1} \SS \rho \Vert_{L^{2q}}    \les    \Vert  \rho \Vert_{L^{2q}}    \les 1    \llabel{EQ95}   \end{equation} by $s\leq \alpha$. Therefore, using Lemma~\ref{L06},    \begin{align}   \begin{split}   J_1   \les   e^{Ct}   (\Vert \Lambda^{s-1}\zeta \Vert_{L^{2q}} + 1) \Vert \Lambda^{s-1}\zeta \Vert_{L^q}^{q-1}   .   \end{split}   \llabel{EQ300}   \end{align} By \eqref{EQ31},  have   \begin{equation}   \Vert \zeta\Vert_{L^{2q}}   \les   \Vert \zeta\Vert_{L^{q}}^{(\alpha-1)/\alpha}   \Vert \Lambda^{\alpha/2}(|\zeta|^{q/2})\Vert_{L^2}^{2/\alpha q}   ,    \label{EQ136}   \end{equation} and thus we obtain   \begin{align}   \begin{split}   J_1   & \les   e^{Ct}\Vert \Lambda^{s-1}\zeta \Vert_{L^q}^{(\alpha-1)/\alpha}   \Vert\Lambda^{\alpha/2}(|\Lambda^{s-1}\zeta|)^{q/2} \Vert_{L^2}^{2/\alpha q}   +   e^{Ct} \Vert \Lambda^{s-1}\zeta \Vert_{L^q}^{q-1}   .   \end{split}   \llabel{EQ51}   \end{align} For $J_2$,  we write   \begin{align}   \begin{split}   \Lambda^{s-1}\bigl([\SS, u \cdot \nabla]\rho \bigr)   &=   \Lambda^{s-1}\bigl(\SS \bigl((u \cdot \nabla)\rho \bigr)\bigr)   -   \Lambda^{s-1}\bigl((u\cdot\nabla)\SS\rho\bigr)   \\&   =   \Lambda^{s-1}\Bigl(\SS \bigl((u \cdot \nabla)\rho \bigr)\Bigr)   -   u\cdot\nabla\bigl(\Lambda^{s-1} (\SS\rho)\bigr)   \\&\indeq   +   u\cdot\nabla\bigl(\Lambda^{s-1} (\SS\rho)\bigr)   -   \Lambda^{s-1}\bigl((u\cdot\nabla)\SS\rho\bigr)   \\&   =   \Lambda^{s-1}\SS \partial_{j}\bigl(u_j \rho \bigr)   -   u_j\partial_{j}\bigl(\Lambda^{s-1} (\SS\rho)\bigr)   \\&\indeq   +   u_j\partial_{j}\bigl(\Lambda^{s-1} (\SS\rho)\bigr)   -   \Lambda^{s-1}\partial_{j}\bigl(u_j\SS\rho\bigr)   ,   \end{split}   \label{EQ52}   \end{align} where we used the divergence-free condition  \eqref{EQ03} in the last step. The first two  and the last two terms  on the far right side of \eqref{EQ52} form commutators, as we may write   \begin{align}    \begin{split}    \Lambda^{s-1}\bigl([\SS, u \cdot \nabla]\rho \bigr)    &=    [\Lambda^{s-1}\SS\partial_{j}, u_j]\rho    - [\Lambda^{s-1}\partial_{j},u_j] \SS\rho    .   \end{split}   \label{EQ53}   \end{align} For the second commutator in \eqref{EQ53}, we apply Lemma~\ref{L02} and obtain   \begin{align}   \begin{split}    \Vert    [\Lambda^{s-1}\partial_{j},u_j] \SS\rho    \Vert_{L^{q}}    \les    \Vert \SS\rho\Vert_{L^{p_{1}}}\Vert \Lambda^{s}u\Vert_{L^{p_{2}}}     + \Vert \Lambda^{s-1}\SS\rho \Vert_{L^{p_{3}}}\                                         \Vert \nabla u \Vert_{L^{p_4}}    ,   \end{split}   \llabel{EQ54}   \end{align} where $1/q =1/p_{1} + 1/p_{2} =1/p_{3} + 1/p_{4}  $ and $p_{i}  \in (q,\infty)$ for $i=1,2,3,4$. Thus, by Lemma~\ref{L05},    \begin{align}   \begin{split}   J_2   & \leq    \Vert \Lambda^{s-1}[\SS, u \cdot \nabla]\rho \Vert_{L^q} \Vert \Lambda^{s-1}\zeta \Vert_{L^q}^{q-1}\\   &\les   (\Vert \nabla u\Vert_{L^{2q}}\Vert \Lambda^{s-1}\bSS \rho\Vert_{L^{2q}} + \Vert \Lambda^{s-1}\bSS\nabla u\Vert_{L^{2q}} \Vert \rho \Vert_{L^{2q}})\Vert \Lambda^{s-1}\zeta\Vert_{L^q}^{q-1}   \\&\indeq   +   (\Vert \SS\rho\Vert_{L^{2q}}\Vert \Lambda^{s}u\Vert_{L^{2q}} + \Vert \rho \Vert_{L^{2q}}\Vert \SS\Lambda^{s}u \Vert_{L^{2q}})\Vert \Lambda^{s-1}\zeta\Vert_{L^q}^{q-1}\\   &=   J_{21} + J_{22}   .   \end{split}   \label{EQ70}   \end{align} Now, we use the conservation property \eqref{EQ25} for the density and the fact that the  operator $\Lambda^{s-1}\bSS$ is of H\"ormander-Mikhlin type, and we get   \begin{align}   \begin{split}   J_{21}   &\les   \Vert \nabla u\Vert_{L^{2q}}   \Vert \Lambda^{s-1}\zeta\Vert_{L^q}^{q-1}   \les   \Vert \omega\Vert_{L^{2q}}   \Vert \Lambda^{s-1}\zeta\Vert_{L^q}^{q-1}   \les   e^{Ct} \Vert \Lambda^{s-1}\zeta\Vert_{L^q}^{q-1}   ,   \end{split}   \label{EQ71}   \end{align} where we applied Lemma~\ref{L04} in the last step. For $J_{22}$,  we choose $p_2 =p_4= 2q$. Then by the conservation of density and \eqref{EQ136} we have   \begin{align}   \begin{split}   J_{22}   &\les   (\Vert \Lambda^{s-1}\omega \Vert_{L^{2q}}   + \Vert \omega\Vert_{L^{2q}} )   \Vert \Lambda^{s-1}\zeta\Vert_{L^q}^{q-1}   \\&   \les   \bigl(   \Vert \Lambda^{s-1}\zeta\Vert_{L^{2q}}   + \Vert \Lambda^{s-1} \SS \rho \Vert_{L^{2q}}   +  e^{C t}   \bigr)   \Vert \Lambda^{s-1}\zeta\Vert_{L^q}^{q-1}   \\&   \les   \bigl(   \Vert \Lambda^{s-1}\zeta\Vert_{L^{2q}}   + e^{C t}   \bigr)   \Vert \Lambda^{s-1}\zeta\Vert_{L^q}^{q-1}   \\&   \les   \Vert \Lambda^{s-1}\zeta\Vert_{L^{q}}^{(\alpha-1)/\alpha+ q-1}   \Vert   \Lambda^{\alpha/2}(|\Lambda^{s-1}\zeta|^{q/2})\Vert^{2/\alpha q}_{L^2}   +   e^{Ct}\Vert \Lambda^{s-1}\zeta\Vert_{L^q}^{q-1}   .   \end{split}   \label{EQ72}   \end{align} From \eqref{EQ70}, \eqref{EQ71}, and \eqref{EQ72}, we conclude   \begin{equation}     J_2     \leq       \frac{1}{C}          \Vert   \Lambda^{\alpha/2}(|\Lambda^{s-1}\zeta|^{q/2})\Vert^{2}_{L^2}       +       C       \Vert \Lambda^{s-1}\zeta\Vert_{L^q}^{q}       +       C       e^{Ct} \Vert \Lambda^{s-1}\zeta\Vert_{L^q}^{q-1}    .    \llabel{EQ137}   \end{equation} For $J_3$, we use Lemma~\ref{L01} and obtain   \begin{align}   J_3   &\leq   \Vert \Lambda^{s-1} N \rho \Vert_{L^q}\Vert \Lambda^{s-1} \zeta\Vert_{L^q}^{q-1}   \les   \Vert  \rho \Vert_{L^q}\Vert \Lambda^{s-1} \zeta\Vert_{L^q}^{q-1}   \les   \Vert \Lambda^{s-1} \zeta\Vert_{L^q}^{q-1}   .   \llabel{EQ73}   \end{align} Combining the estimates of $J_1$, $J_2$, and $J_3$, using Young's inequality, we get   \begin{align}   \begin{split}   & \frac{d}{dt}\Vert \Lambda^{s-1}\zeta\Vert_{L^q}^{q}   + \frac{1}{C} \Vert \Lambda^{\alpha/2}( |\Lambda^{s-1}\zeta|^{q/2}) \Vert_{L^2}^2        \les   e^{Ct}\Vert \Lambda^{s-1}\zeta\Vert_{L^q}^{q-1}   + e^{C t}   .   \end{split}   \label{EQ74}   \end{align} Setting   \begin{align}   X   &=   \Vert \Lambda^{s-1} \zeta \Vert_{L^q}^q,   \llabel{EQ75}\\   \bar X   &=   \Vert \Lambda^{\alpha/2}(|\Lambda^{s-1}\zeta|^{q/2})\Vert_{L^2}^2   ,   \llabel{EQ76}   \end{align} we may rewrite \eqref{EQ74} as   \begin{align}   \frac{d}{dt} X   +   \frac{1}{C}{\bar X}    \les   e^{Ct} (X^{1-1/q}+1)   .   \llabel{EQ78}   \end{align} Therefore, by the Gronwall lemma,   \begin{align}   \Vert \Lambda^{s-1} \zeta \Vert_{L^q}   \les   e^{C t}   \comma t\ge0    .   \llabel{EQ79}   \end{align} Similarly to Lemma~\ref{L06}, we also obtain   \begin{align}   \Vert \Lambda^{s-1} \zeta \Vert_{L^{\bar q}}   \les   e^{Ct}   \comma t\ge0   ,   \llabel{EQ799}   \end{align} for all $\bar q \in [q, \infty)$, where the constant $C$ depends on $\bar q$. Consequently, we get   \begin{align}   \Vert \Lambda^{s-1} \omega \Vert_{L^{\bar q}}   \leq   \Vert \Lambda^{s-1} \zeta \Vert_{L^{\bar q}}   +   \Vert \Lambda^{s-1} \SS\rho \Vert_{L^{\bar q}}   \les   e^{C t}    \comma \bar q\in [q,\infty)    .   \label{EQ80}   \end{align} \par\heyu{ AdE EK v7Ta k7cA ilRfvr lm 8 2Nj Ng9 KDS vN oQiN hng2 tnBSVw d8 P 4o3 oLq rzP NH ZmkQ Itfj 61TcOQ PJ b lsB Yq3 Nul Nf rCon Z6kZ 2VbZ0p sQ A aUC iMa oRp FW fviT xmey zmc5Qs El 1 PNO Z4x otc iI nwc6 IFbp wsMeXx y8} Next, we consider the evolution of $\Vert \Lambda^s \rho \Vert_{L^q}$. We apply $\Lambda^s$ to the equation \eqref{EQ02},  multiply it by $| \Lambda^s \rho |^{q-2} \Lambda^{s}\rho$, and integrate obtaining   \begin{equation}    \frac{1}{q}\frac{d}{dt}\Vert \Lambda^s \rho \Vert_{L^q}^q     +    \int \Lambda^s (u \cdot \nabla \rho) | \Lambda^s \rho |^{q-2}\Lambda^s \rho \,dx    = 0    .   \llabel{EQ81}   \end{equation} Therefore, using Lemma~\ref{L02},   \begin{align}   \begin{split}   \frac{1}{q}\frac{d}{dt}\Vert \Lambda^s \rho \Vert_{L^q}^q   & = -\int \Lambda^s (u \cdot \nabla \rho) | \Lambda^s \rho |^{q-2} \Lambda^{s}\rho \,dx\\   & = -\int \left(\Lambda^s (u \cdot \nabla \rho) - u \cdot \Lambda^s \nabla \rho \right) | \Lambda^s \rho |^{q-2} \Lambda^{s}\rho \,dx\\   &\les   \Vert \Lambda^s (u \cdot \nabla \rho) - u \cdot \Lambda^s \nabla \rho \Vert_{L^{q}} \Vert \Lambda^{s} \rho \Vert_{L^{q}}^{q-1}  \\   &\les   \left(\Vert \Lambda^s u \Vert_{L^{s_1}} \Vert \nabla \rho \Vert_{L^{s_{2}}} + \Vert \Lambda u \Vert_{L^{\infty}} \Vert \Lambda^s \rho \Vert_{L^q}\right) \Vert \Lambda^{s} \rho \Vert_{L^{q}}^{q-1}   ,   \end{split}   \llabel{EQ82}   \end{align} under the conditions  $s_{1},s_{2} \in (q, \infty)$ and  $1/q=1/s_{1} + 1/s_{2}$. Now, choose   \begin{equation}    s_2= q+ \frac{s-1}{C_0}    ,    \llabel{EQ138}   \end{equation} where $C_0$ is a positive  constant.  Note that $s_1,s_2\in(q,\infty)$. If $C_0$ is sufficiently large, we may use the fractional Gagliardo-Nirenberg inequality to write   \begin{equation}    \Vert \nabla \rho\Vert_{L^{s_2}}    \les    \Vert \rho \Vert_{L^{q}}^{1-\lambda}    \Vert \Lambda^s \rho \Vert_{L^{q}}^{\lambda}      \llabel{EQ89}   \end{equation} with $\lambda\in(0,1)$.  Therefore, using \eqref{EQ80},   \begin{align}   \begin{split}   \frac{1}{q}\frac{d}{dt}\Vert \Lambda^s \rho \Vert_{L^q}^q   &\les   \left(\Vert \Lambda^{s-1} \omega \Vert_{L^{s_1}}    \Vert \rho \Vert_{L^{q}}^{1-\lambda}\Vert \Lambda^s \rho \Vert_{L^{q}}^{\lambda}     +   \Vert \Lambda u \Vert_{L^{\infty}} \Vert \Lambda^s \rho \Vert_{L^q}\right) \Vert \Lambda^{s} \rho \Vert_{L^{q}}^{q-1}   \\&\les   e^{C t}   \Vert \Lambda^s \rho \Vert_{L^{q}}^{q+\lambda-1}     +   \Vert \Lambda u \Vert_{L^{\infty}} \Vert \Lambda^s \rho \Vert_{L^q}^{q}    .   \end{split}   \label{EQ83}   \end{align} Let  $\bar q\in[q,\infty)$ be sufficiently large so that we have   \begin{align}   \begin{split}   \Vert \Lambda u \Vert_{L^{\infty}}   &\les   \Vert \Lambda u\Vert_{L^{\bar q}}^{1-\mu}   \Vert \Lambda^{s-1} (\Lambda u) \Vert_{L^{\bar q}}^{\mu}   \end{split}   \llabel{EQ84}   \end{align} where $\mu\in(0,1)$. Then we get   \begin{align}   \begin{split}   \Vert \Lambda u \Vert_{L^{\infty}}   &\les   \Vert \Lambda u\Vert_{L^{\bar q}}^{1-\mu}   \Vert \Lambda^{s-1} (\Lambda u) \Vert_{L^{\bar q}}^{\mu}   \les   \Vert \omega\Vert_{L^{\bar q}}^{1-\mu}   \Vert \Lambda^{s-1} \omega \Vert_{L^{\bar q}}^{\mu}   \les   e^{Ct}   \end{split}   \llabel{EQ84a}   \end{align} by \eqref{EQ39} and \eqref{EQ80}. Hence, continuing from \eqref{EQ83}, we get   \begin{align}   \begin{split}   \frac{d}{dt}\Vert \Lambda^s \rho \Vert_{L^q}^q   &\les   e^{Ct}   (      1+ \Vert \Lambda^s \rho \Vert_{L^{q}}^{q}     )    .   \llabel{EQ86}   \end{split}   \end{align} The proof of persistence for $s\in(1,\alpha]$ is then concluded by an application of the Gronwall lemma. \par\heyu{ l J4A 6OV 0qR zr St3P MbvR gOS5ob ka F U9p OdM Pdj Fz 1KRX RKDV UjveW3 d9 s hi3 jzK BTq Zk eSXq bzbo WTc5yR RM o BYQ PCa eZ2 3H Wk9x fdxJ YxHYuN MN G Y4X LVZ oPU Qx JAli DHOK ycMAcT pG H Ikt jlI V25 YY oRC7} It remains to prove the uniqueness of solutions. Consider two solutions  $(u^{(1)}, p^{(1)}, \rho^{(1)})$ and $(u^{(2)}, p^{(2)}, \rho^{(2})$ of the system  \eqref{EQ01}--\eqref{EQ03}, and set   \begin{align}   \begin{split}   & U   = u^{(1)} - u^{(2)}   \\    & R = \rho^{(1)} - \rho^{(2)}   \\    & P= p^{(1)}- p^{(2)}   .   \end{split}   \llabel{EQ134}   \end{align}  Subtracting the equations for $(u^{(1)}, p^{(1)}, \rho^{(1)})$ and $(u^{(2)}, p^{(2)}, \rho^{(2})$, we get   \begin{align}      U_t      +\Lambda^{\alpha} U     + U\cdot \nabla u^{(1)}      +u^{(2)}\cdot \nabla U      +     \nabla P     =     Re_{2}     \label{EQ155}   \end{align}  and   \begin{align}      R_t + U\cdot \nabla \rho^{(1)} + u^{(2)}\cdot \nabla R =0     .    \label{EQ156}   \end{align}  We shall establish uniqueness in the space $L^2({\mathbb R}^2)\times L^{r}({\mathbb R}^2)$ where   \begin{equation}    r=\frac{4q}{2q-q\alpha+4}    .    \llabel{EQ141}   \end{equation} Note that  $1<r<\infty$ and  $(U(0),R(0))=(0,0)\in L^2({\mathbb R}^2) \times L^{r}({\mathbb R}^2)$. From \eqref{EQ155}, we get   \begin{align}    \begin{split}    &    \frac{1}{2}\frac{d}{dt} \Vert U\Vert_{L^2}^2    + \Vert \Lambda^{\alpha/2} U\Vert_{L^2}^2    \les    \Vert \nabla u^{(1)}\Vert_{L^\infty} \Vert U\Vert_{L^2}^2    +    \Vert U\Vert_{L^{r/(r-1)}}    \Vert R\Vert_{L^{r}}    \\&\indeq    \les    \Vert \nabla u^{(1)}\Vert_{L^\infty} \Vert U\Vert_{L^2}^2    +    \Vert \Lambda^{\alpha/2}U\Vert_{L^{2}}    \Vert R\Vert_{L^{r}}    +    \Vert U\Vert_{L^{2}}    \Vert R\Vert_{L^{r}}    ,    \end{split}    \label{EQ90}   \end{align} where we used   \begin{equation}    \frac{r}{r-1} \leq \frac{4}{2-\alpha}    ,    \llabel{EQ145}   \end{equation} which follows from $r\ge 4/(2+\alpha)$  and this holds by $q\alpha\ge 2$. Also, \eqref{EQ156} implies   \begin{align}    \begin{split}    \frac{d}{dt} \Vert R\Vert_{L^r}^r    \les    \Vert U\Vert_{L^{4/(2-\alpha)}}    \Vert \nabla \rho^{(1)}\Vert_{L^{q}}    \Vert R\Vert_{L^{r}}^{r-1}    ,    \end{split}    \llabel{EQ140}   \end{align} from where   \begin{align}    \begin{split}    \frac{d}{dt} \Vert R\Vert_{L^r}^2    \les    \Vert U\Vert_{L^{4/(2-\alpha)}}    \Vert \nabla \rho^{(1)}\Vert_{L^{q}}    \Vert R\Vert_{L^{r}}    \les    \Vert \Lambda^{\alpha/2}U\Vert_{L^{2}}    \Vert \rho^{(1)}\Vert_{W^{s,q}}     \Vert R\Vert_{L^{r}}    .    \end{split}    \label{EQ142}   \end{align} Now, $u^{(1)}\in L_{\rm loc}^{\infty}((0,\infty)([0,T], W^{s,\bar q}({\mathbb R}^2))$ for all $\bar q\in [q,\infty)$, and $W^{s,\bar q}({\mathbb R}^2)\subseteq W^{1,\infty}({\mathbb R}^2)$ for all $\bar q$ sufficiently large. Thus \eqref{EQ90} and \eqref{EQ142} imply $U(t)=0$ and $R(t)=0$ for all $t\ge0$. \end{proof} \par\heyu{ 4thS sJClD7 6y x M6B Rhg fS0 UH 4wXV F0x1 M6Ibem sT K SWl sG9 pk9 5k ZSdH U31c 5BpQeF x5 z a7h WPl LjD Yd KH1p OkMo 1Tvhxx z5 F LLu 71D UNe UX tDFC 7CZ2 473sjE Re b aYt 2sE pV9 wD J8RG UqQm boXwJn HK F Mps XBv} \begin{Remark} \label{R01} {\rm Note that the identity \eqref{EQ53} only uses the additivity  of $\Lambda^{s-1}$ and the fact that it commutes with the differential operators. Thus, for any multiplier operator $T$, we have   \begin{align}    \begin{split}    T\bigl([ \SS, u \cdot \nabla]\rho \bigr)    &=    [T \SS\partial_{j}, u_j]\rho    - [T\partial_{j},u_j]  \SS\rho    .   \end{split}   \notag   \end{align} The proof  of this identity uses the fact that $u$ is divergence-free. } \end{Remark} \par\heyu{ AsX 8N YRZM wmZQ ctltsq of i 8wx n6I W8j c6 8ANB wz8f 4gWowk mZ P Wlw fKp M1f pd o0yT RIKH MDgTl3 BU B Wr6 vHU zFZ bq xnwK kdmJ 3lXzIw kw 7 Jku JcC kgv FZ 3lSo 0ljV Ku9Syb y4 6 zDj M6R XZI DP pHqE fkHt 9SVnVt Wd} \par\heyu{ y YNw dmM m7S Pw mqhO 6FX8 tzwYaM vj z pBS NJ1 z36 89 00v2 i4y2 wQjZhw wF U jq0 UNm k8J 8d OOG3 QlDz p8AWpr uu 4 D9V Rlp VVz QQ g1ca Eqev P0sFPH cw t KI3 Z6n Y79 iQ abga 0i9m RVGbvl TA g V6P UV8 Eup PQ 6xvG} \par\heyu{ bcn7 dQjV7C kw 5 7NP WUy 9Xn wF 9ele bZ8U YJDx3x CB Y CId PCE 2D8 eP 90u4 9NY9 Jxx9RI 4F e a0Q Cjs 5TL od JFph ykcz Bwoe97 Po h Tql 1LM s37 cK hsHO 5jZx qpkHtL bF D nvf Txj iyk LV hpwM qobq DM9A0f 1n 4 i5S Bc6} \startnewsection{The  Sobolev persistence for $s>\alpha$}{sec05}  We now consider the persistence of regularity when $s>\alpha$. \par\heyu{ trq VX wgQB EgH8 lISLPL O5 2 EUv i1m yxk nL 0RBe bO2Y Ww8Jhf o1 l HlU Mie sst dW w4aS WrYv Osn5Wn 3w f wzH RHx Fg0 hK FuNV hjzX bg56HJ 9V t Uwa lOX fT8 oi FY1C sUCg CETCIv LR 0 AgT hCs 9Ta Zl 6ver 8hRt edkAUr kI} \par\heyu{ n Sbc I8n yEj Zs VOSz tBbh 7WjBgf aA F t4J 6CT UCU 54 3rba vpOM yelWYW hV B RGo w5J Rh2 nM fUco BkBX UQ7UlO 5r Y fHD Mce Wou 3R oFWt baKh 70oHBZ n7 u nRp Rh3 SIp p0 Btqk 5vhX CU9BHJ Fx 7 qPx B55 a7R kO yHmS} \par\heyu{ h5vw rDqt0n F7 t oPJ UGq HfY 5u At5k QLP6 ppnRjM Hk 3 HGq Z0O Bug FF xSnA SHBI 7agVfq wf g aAl eH9 DMn XQ QTAA QM8q z9trz8 6V R 2gO MMV uMg f6 tGLZ WEKq vkMEOg Uz M xgN 4Cb Q8f WY 9Tk7 3Gg9 0jy9dJ bO v ddV Zmq} \par\heyu{ Jjb 5q Q5BS Ffl2 tNPRC8 6t I 0PI dLD UqX KO 1ulg XjPV lfDFkF h4 2 W0j wkk H8d xI kjy6 GDge M9mbTY tU S 4lt yAV uor 6w 7Inw Ch6G G9Km3Y oz b uVq tsX TNZ aq mwkz oKxE 9O0QBQ Xh x N5L qr6 x7S xm vRwT SBGJ Y5uo5w SN} \begin{proof}[Proof of Theorem~\ref{T01} for the case $s>\alpha$] Let $J_1$, $J_2$, and $J_3$ be as in  \eqref{EQ49}. For $J_1$,  \eqref{EQ50} and Lemma~\ref{L02} imply   \begin{align}   \begin{split}   J_1   & \les   (\Vert \Lambda^{s-1}\zeta \Vert_{L^{r_2}}\Vert \Lambda u \Vert_{L^{r_{1}}}     +\Vert \zeta \Vert_{L^{r_1}} \Vert \Lambda^{s-1} \omega \Vert_{L^{r_2}} ) \Vert \Lambda^{s-1}\zeta \Vert_{L^q}^{q-1}   \\& \les   \left(\Vert \Lambda^{s-1}\zeta \Vert_{L^{r_{2}}}\Vert \omega \Vert_{L^{r_{1}}}   +\Vert \zeta \Vert_{L^{r_1}} \Vert \Lambda^{s-1} \zeta \Vert_{L^{r_2}}    + \Vert \zeta \Vert_{L^{r_1}}\Vert \Lambda^{s-1} \SS \rho \Vert_{L^{r_2}} \right) \Vert \Lambda^{s-1}\zeta \Vert_{L^q}^{q-1}   \\& \les   e^{Ct}   ( \Vert \Lambda^{s-1} \zeta \Vert_{L^{r_2}} + \Vert \Lambda^{s-\alpha}  \rho \Vert_{L^{r_2}} ) \Vert \Lambda^{s-1}\zeta \Vert_{L^q}^{q-1}   ,   \end{split}   \label{EQ92}   \end{align} for any  $r_1, r_2\in (q,\infty)$ such that $1/q=1/r_{1} + 1/r_{2} $.  We restrict   \begin{equation}    r_2 \in \left(q,  2q/(2-\alpha)\right)    \llabel{EQ112}   \end{equation} so that we may use the inequality \eqref{EQ31} obtaining   \begin{equation}    \Vert \Lambda^{s-1}\zeta\Vert_{L^{r_2}}    \les    \Vert \Lambda^{s-1}\zeta\Vert_{L^{q}}^{1-\theta_1}    \Vert \Lambda^{\alpha/2}(|\Lambda^{s-1}\zeta|^{q/2})\Vert_{L^2}^{2\theta_1/ q}    \label{EQ119}   \end{equation} where $\theta_1=2(r_2-q)/\alpha r_2$. Also, we have   \begin{align}     \Vert \Lambda^{s-\alpha}\rho \Vert_{L^{r_{2}}}     \les     \Vert \rho \Vert_{L^{q}}^{1-\theta_2}\Vert \Lambda^{s}\rho \Vert^{\theta_2}_{L^q}     \les     \Vert \Lambda^{s}\rho \Vert^{\theta_2}_{L^q}   \label{EQ93}   \end{align} with $\theta_2=(2/q-2/r_2+s-\alpha)/s$. Thus, by \eqref{EQ92} and \eqref{EQ119}, we obtain   \begin{align}    \begin{split}    J_{1}    &\les     e^{Ct}    \Vert \Lambda^{s-1}\zeta \Vert_{L^q}^{1-\theta_1}    \Vert\Lambda^{\alpha/2}(|\Lambda^{s-1}\zeta|^{q/2}) \Vert_{L^2}^{2\theta_1/q}    +    e^{Ct}    \Vert \Lambda^{s}\rho \Vert_{L^{q}}^{\theta_2}\Vert \Lambda^{s-1} \zeta \Vert_{L^{q}}^{q-1}    .    \llabel{EQ96}    \end{split}    \end{align} The term $J_2$ is rewritten using \eqref{EQ53} as   \begin{align}   \begin{split}    J_2    &=    \int        [\Lambda^{s-1}\SS\partial_{j}, u_j]\rho    |\Lambda^{s-1}\zeta|^{q-2}\Lambda^{s-1} \zeta\,dx    -    \int     [\Lambda^{s-1}\partial_{j},u_j] \SS\rho    |\Lambda^{s-1}\zeta|^{q-2}\Lambda^{s-1} \zeta\,dx    \\&    = J_{21} + J_{22}    .   \end{split}    \llabel{EQ118}   \end{align} For the first term, we have   \begin{align}   \begin{split}   J_{21}   &\les   (\Vert \nabla u\Vert_{L^{r_1}}   \Vert \Lambda^{s-1}\bSS\rho\Vert_{L^{r_2}}   +    \Vert \Lambda^{s-1}\bSS\nabla u\Vert_{L^{r_3}}   \Vert \rho\Vert_{L^{r_{4}}} )   \Vert \Lambda^{s-1}\zeta\Vert_{L^q}^{q-1}   \\& \les   e^{C t}   (   \Vert \Lambda^{s-\alpha}\rho\Vert_{L^{r_2}}   +    \Vert \Lambda^{s-\alpha}\omega\Vert_{L^{r_3}})   \Vert \Lambda^{s-1}\zeta\Vert_{L^q}^{q-1}   ,   \end{split}   \label{EQ97}   \end{align} where $r_3, r_4\in(q,\infty)$ are such that $1/r_3+1/r_4=1/q$. For $\Vert \Lambda^{s-\alpha}\rho\Vert_{L^{r_2}}$, we  use \eqref{EQ93}, while for $\Vert \Lambda^{s-\alpha}\omega\Vert_{L^{r_3}}$, we have by the triangle inequality   \begin{align}   \begin{split}   \Vert \Lambda^{s-\alpha}\omega\Vert_{L^{r_{3}}}    &\les   \Vert \Lambda^{s-\alpha}\zeta\Vert_{L^{r_{3}}}   +   \Vert \Lambda^{s-\alpha}\SS\rho\Vert_{L^{{r_{3}}}}   \les   \Vert \zeta\Vert_{L^{q}}^{1-\theta_3}   \Vert \Lambda^{s-1}\zeta\Vert_{L^{q}}^{\theta_3}   +   \Vert \Lambda^{(s-2\alpha+1)_{+}}\rho\Vert_{L^{r_3}}   \\&   \les               e^{Ct}   \Vert \Lambda^{s-1}\zeta\Vert_{L^{q}}^{\theta_3}   +   \Vert \Lambda^{(s-2\alpha+1)_{+}}\rho\Vert_{L^{r_3}}   ,   \llabel{EQ99}   \end{split}   \end{align} where  $\theta_3=(2/q-2/r_3+s-\alpha)/(s-1)$, as long as $r_3$ is sufficiently close to $q$. From \eqref{EQ97} we thus obtain   \begin{align}    \begin{split}    J_{21}    &\le    e^{Ct}    \left(     \Vert \Lambda^{s}\rho \Vert^{\theta_2}_{L^q}      +     \Vert \Lambda^{s-1}\zeta\Vert_{L^{q}}^{\theta_3}      +      1      \colb    \right)    \Vert \Lambda^{s-1}\zeta\Vert_{L^q}^{q-1}    \end{split}    \label{EQ98}   \end{align} if $s \leq 2\alpha-1$, and   \begin{align}    \begin{split}    J_{21}    &\le    e^{Ct}    \left(     \Vert \Lambda^{s}\rho \Vert^{\theta_2}_{L^q}      +     \Vert \Lambda^{s-1}\zeta\Vert_{L^{q}}^{\theta_3}      +     \Vert \Lambda^{s}\rho\Vert_{L^q}^{(s-2\alpha+1)/s+\epsilon_0}    \right)    \Vert \Lambda^{s-1}\zeta\Vert_{L^q}^{q-1}    \comma s> 2\alpha-1    ,    \end{split}    \llabel{EQ98a}   \end{align} where $\epsilon_0>0$ is arbitrarily small if $r_3$ is sufficiently close to $q$. Since $\theta_2 > (s-2\alpha+1)/s$, we obtain that \eqref{EQ98} holds even if $s>2\alpha-1$ as long as $r_3>q$ is sufficiently close to $q$. For $J_{22}$,  we recall \eqref{EQ70}, by which   \begin{align}   \begin{split}    J_{22}     &   \les    \bigl(      \Vert \bSS\rho\Vert_{L^{r_1}}      \Vert \Lambda^{s}u\Vert_{L^{r_2}}       + \Vert \rho \Vert_{L^{\infty}}        \Vert \bSS\Lambda^{s}u \Vert_{L^{q}}     \bigr)      \Vert \Lambda^{s-1}\zeta\Vert_{L^q}^{q-1}     \\&    \les    e^{C t}    (      \Vert \Lambda^{s}u\Vert_{L^{r_2}}       +      \Vert \bSS\Lambda^{s}u \Vert_{L^{q}}    )      \Vert \Lambda^{s-1}\zeta\Vert_{L^q}^{q-1}    \\&    \les    e^{C t}    (      \Vert \Lambda^{s-1}\zeta\Vert_{L^{r_2}}       +     \Vert \Lambda^{s-1} \SS \rho \Vert_{L^{r_2}}      +         \Vert \bSS\Lambda^{s}u \Vert_{L^{q}}    )      \Vert \Lambda^{s-1}\zeta\Vert_{L^q}^{q-1}   \end{split}    \llabel{EQ85}   \end{align} and thus   \begin{align}   \begin{split}    J_{22}     &   \les    e^{C t}    (      \Vert \Lambda^{s-1}\zeta\Vert_{L^{r_2}}       +     \Vert \Lambda^{s-\alpha} \rho \Vert_{L^{r_2}}      +         \Vert \bSS\Lambda^{s}u \Vert_{L^{q}}    )      \Vert \Lambda^{s-1}\zeta\Vert_{L^q}^{q-1}    \\&    \les    e^{C t}    (      \Vert \Lambda^{s-1}\zeta\Vert_{L^{r_2}}       +     \Vert \Lambda^{s-\alpha} \rho \Vert_{L^{r_2}}      +      \Vert \bSS\Lambda^{s-1}\omega \Vert_{L^{q}}    )      \Vert \Lambda^{s-1}\zeta\Vert_{L^q}^{q-1}    \\&    \les    e^{C t}    (      \Vert \Lambda^{s-1}\zeta\Vert_{L^{r_2}}       +     \Vert \Lambda^{s-\alpha} \rho \Vert_{L^{r_2}}      +      \Vert \bSS\Lambda^{s-1}\zeta \Vert_{L^{q}}      +      \Vert \bSS\Lambda^{s-1}\SS\rho \Vert_{L^{q}}    )      \Vert \Lambda^{s-1}\zeta\Vert_{L^q}^{q-1}   .   \end{split}    \llabel{EQ100}   \end{align} Note that the last two terms inside the parentheses are lower order compared to the first two. Therefore,   \begin{align}   \begin{split}    J_{22}     &    \les    e^{C t}    (      \Vert \Lambda^{s-1}\zeta\Vert_{L^{r_2}}       +     \Vert \Lambda^{s-\alpha} \rho \Vert_{L^{r_2}}      +       \Vert \zeta\Vert_{L^{r_2}}       +       \Vert \zeta\Vert_{L^{q}}      +     \Vert \rho \Vert_{L^{r_2}}          +     \Vert \rho \Vert_{L^{q}}         )         \Vert \Lambda^{s-1}\zeta\Vert_{L^q}^{q-1}    \\&    \les    e^{C t}    (      \Vert \Lambda^{s-1}\zeta\Vert_{L^{r_2}}       +     \Vert \Lambda^{s-\alpha} \rho \Vert_{L^{r_2}}      +       1    )         \Vert \Lambda^{s-1}\zeta\Vert_{L^q}^{q-1}   .   \end{split}    \llabel{EQ87}   \end{align} The right hand side does not lead to any new terms compared to the estimate for $J_1$ in \eqref{EQ92}, except for the lower order third term inside the parentheses. \par\heyu{ G p3h Ccf QNa fX Wjxe AFyC xUfM8c 0k K kwg psv wVe 4t FsGU IzoW FYfnQA UT 9 xcl Tfi mLC JR XFAm He7V bYOaFB Pj j eF6 xI3 CzO Vv imZ3 2pt5 uveTrh U6 y 8wj wAy IU3 G1 5HMy bdau GckOFn q6 a 5Ha R4D Ooj rN Ajdh} Next, we treat $J_{3}$. When $s\le 2$, we have   \begin{align}   J_3   &\les   \Vert \Lambda^{s-1} N \rho \Vert_{L^q}\Vert \Lambda^{s-1} \zeta\Vert_{L^q}^{q-1}   \les   \Vert \rho \Vert_{L^{q}}   \Vert\Lambda^{s-1} \zeta\Vert_{L^q}^{q-1}   \les   \Vert \Lambda^{s-1} \zeta\Vert_{L^q}^{q-1}   ,   \llabel{EQ107}   \end{align} while if $s\ge 2$,   \begin{align}     \begin{split}   J_3   &\les   \Vert \Lambda^{s-1} N \rho \Vert_{L^q}\Vert \Lambda^{s-1} \zeta\Vert_{L^q}^{q-1}   \les   \Vert \Lambda^{s-2}\rho \Vert_{L^{q}}   \Vert\Lambda^{s-1} \zeta\Vert_{L^q}^{q-1}   \\&   \les   \Vert \rho\Vert_{L^{q}}^{2/s}   \Vert \Lambda^{s}\rho\Vert_{L^{q}}^{(s-2)/s}   \Vert\Lambda^{s-1} \zeta\Vert_{L^q}^{q-1}   \les   \Vert \Lambda^{s}\rho\Vert_{L^{q}}^{(s-2)/s}   \Vert \Lambda^{s-1} \zeta\Vert_{L^q}^{q-1}    .     \end{split}      \llabel{EQ121}   \end{align} We thus conclude   \begin{align}   \begin{split}   &\frac{d}{dt}\Vert \Lambda^{s-1}\zeta \Vert_{L^q}^q    +   \frac{1}{C}   \Vert \Lambda^{\alpha/2} (|\Lambda^{s-1}\zeta|^{q/2})\Vert_{L^2}^2   \\&\indeq   \les      e^{Ct}      \Vert \Lambda^{s-1}\zeta \Vert_{L^q}^{q}         +      e^{Ct}      \Vert \Lambda^{s-1}\zeta \Vert_{L^q}^{q-1}      +      e^{Ct}      \Vert \Lambda^{s}\rho \Vert_{L^{q}}^{\theta_2}\Vert \Lambda^{s-1} \zeta \Vert_{L^{q}}^{q-1}   \\&\indeq\indeq   +   \Vert \Lambda^{s}\rho\Vert_{L^{q}}^{((s-2)/s)_{+}}   \Vert \Lambda^{s-1} \zeta\Vert_{L^q}^{q-1}      .   \end{split}    \label{EQ122}   \end{align} \par\heyu{ SmhO tphQpc 9j X X2u 5rw PHz W0 32fi 2bz1 60Ka4F Dj d 1yV FSM TzS vF 1YkR zdzb YbI0qj KM N XBF tXo CZd j9 jD5A dSrN BdunlT DI a A4U jYS x6D K1 X16i 3yiQ uq4zoo Hv H qNg T2V kWG BV A4qe o8HH 70FflA qT D BKi 461} Next, we consider  $\Vert \Lambda^{s} \rho \Vert_{L^q}$.  First, we have by Sobolev embedding, with  $q^{*}=\max\{2/(\alpha-1),q\}+1$,   \begin{align}   \Vert \Lambda u \Vert_{L^{\infty}}   \les   \Vert \Lambda^{\alpha-1}\Lambda u \Vert_{L^{q^{*}}}    +   \Vert \Lambda u \Vert_{L^{q^{*}}}   \les   \Vert \Lambda^{\alpha-1} \zeta \Vert_{L^{q^{*}}}   +   \Vert \Lambda^{\alpha-1} \SS\rho \Vert_{L^{q^{*}}}    +   \Vert \omega \Vert_{L^{q^{*}}}   \les   \phi(t)   ,   \label{EQ108}   \end{align} where we used Theorem~\ref{T01} in the third inequality and where   \begin{equation}    \phi(t)=C \exp {(C\exp (C t))}    \llabel{EQ94}   \end{equation} with sufficiently large $C$. (The dependence on $t$ can be improved, but we do not optimize the dependence in this paper.) Thus, by Lemma~\ref{L02},    \begin{align}   \begin{split}   \frac{1}{q}\frac{d}{dt}\Vert \Lambda^s \rho \Vert_{L^q}^q   & = -\int \bigl(\Lambda^s (u \cdot \nabla \rho) - u \cdot \Lambda^s \nabla \rho \bigr) | \Lambda^s \rho |^{q-2} \Lambda^{s}\rho \,dx   \\&\les   \Vert \Lambda^s (u \cdot \nabla \rho) - u \cdot \Lambda^s \nabla \rho \Vert_{L^{q}} \Vert \Lambda^{s} \rho \Vert_{L^{q}}^{q-1}     \\&\les  \left(\Vert \Lambda^s u \Vert_{L^{s_1}} \Vert \nabla \rho \Vert_{L^{s_{2}}} + \Vert \Lambda u \Vert_{L^{\infty}} \Vert \Lambda^s \rho \Vert_{L^q}\right) \Vert \Lambda^{s} \rho \Vert_{L^{q}}^{q-1}   \\&\les   \left(\Vert \Lambda^{s-1} \omega \Vert_{L^{s_1}} \Vert \nabla \rho \Vert_{L^{s_{2}}} + \phi(t) \Vert \Lambda^s \rho \Vert_{L^q}\right) \Vert \Lambda^{s} \rho \Vert_{L^{q}}^{q-1}   \end{split}   \label{EQ109}   \end{align} where $s_1,s_2\in(q,\infty)$ are such that $1/s_1+1/s_2=1/q$. At this point, we employ an inequality from \cite{BM}, which gives   \begin{align}    \Vert \nabla\rho\Vert_{L^{s_{2}}}    \les    \Vert \rho\Vert_{L^{{\bar s_2}}}^{1-1/s}    \Vert \Lambda^{s}\rho\Vert_{L^{{q}}}^{1/s}    \les    \Vert \Lambda^{s}\rho\Vert_{L^{{q}}}^{1/s}   \label{EQ110}   \end{align} where $1/s_2=1/sq+(1/\bar s_2)(1-1/s)$, assuming that $    s_2\le  q s $, which is equivalent to   \begin{equation}    s_1\ge \frac{qs}{qs-1}    .    \llabel{EQ125}   \end{equation} From \eqref{EQ109} and \eqref{EQ110} we then obtain   \begin{align}   \begin{split}    \frac{d}{dt}\Vert \Lambda^s \rho \Vert_{L^q}       \les        \Vert \Lambda^{s-1} \omega \Vert_{L^{s_1}}          \Vert \Lambda^{s}\rho \Vert_{L^{q}}^{1/s}             + \phi(t) \Vert \Lambda^s \rho \Vert_{L^q}   .   \end{split}    \label{EQ126} \end{align} If   \begin{equation}    s_1\le \frac{2q}{2-\alpha}    \label{EQ123}   \end{equation} we may apply \eqref{EQ31} and obtain   \begin{align}    \begin{split}    \Vert \Lambda^{s-1}\omega\Vert_{L^{s_{1}}}    &\leq    \Vert \Lambda^{s-1}\zeta\Vert_{L^{s_{1}}}    +    \Vert \Lambda^{s-1}\SS\rho\Vert_{L^{s_{1}}}    \\&    \les    \Vert \Lambda^{s-1}\zeta\Vert_{L^{{ q}}}^{1-\theta_3}\Vert \Lambda^{\alpha/2}(|\Lambda^{s-1}\zeta|^{{q}/2})\Vert^{2\theta_3/{q}}_{L^2}    +    \Vert \Lambda^{s-\alpha}\rho\Vert_{L^{s_{1}}}    \\&    \les    \Vert \Lambda^{s-1}\zeta\Vert_{L^{{ q}}}^{1-\theta_3}\Vert \Lambda^{\alpha/2}(|\Lambda^{s-1}\zeta|^{{q}/2})\Vert^{2\theta_3/{q}}_{L^2}    +     \Vert \rho \Vert_{L^{q}}^{1-\theta_4}\Vert \Lambda^{s}\rho \Vert^{\theta_4}_{L^q}    \\&    \les    \Vert \Lambda^{s-1}\zeta\Vert_{L^{{ q}}}^{1-\theta_3}\Vert \Lambda^{\alpha/2}(|\Lambda^{s-1}\zeta|^{{q}/2})\Vert^{2\theta_3/{q}}_{L^2}    +     \Vert \Lambda^{s}\rho \Vert^{\theta_4}_{L^q}    ,    \llabel{EQ111}   \end{split}   \end{align} where  $\theta_3=2(s_1-q)/\alpha s_1$ and  $\theta_4=(s-\alpha-2/s_1+2/q)/s$. Therefore, by \eqref{EQ126},   \begin{align}   \begin{split}    \frac{d}{dt}\Vert \Lambda^s \rho \Vert_{L^q}     \les    \Vert \Lambda^{s-1}\zeta\Vert_{L^{{ q}}}^{1-\theta_3}\Vert \Lambda^{\alpha/2}(|\Lambda^{s-1}\zeta|^{{q}/2})\Vert^{2\theta_3/{q}}_{L^2}    \Vert \Lambda^{s}\rho \Vert_{L^{q}}^{1/s}    +     \Vert \Lambda^{s}\rho \Vert^{\theta_4+1/s}_{L^q}            + \phi(t) \Vert \Lambda^s \rho \Vert_{L^q}    .   \end{split}    \label{EQ127}   \end{align} Now, in order to conclude the proof, let $\gamma>0$, and denote   \begin{align}   X   &=   \Vert \Lambda^{s-1} \zeta \Vert_{L^q}^{q}   ,   \llabel{EQ113}\\   Y    &=   (\Vert \Lambda^{s} \rho \Vert_{L^q}+1)^{q/\gamma}   ,   \llabel{EQ115}\\   Z   &=   \Vert \Lambda^{\alpha/2}(|\Lambda^{s-1}\zeta|^{q/2})\Vert_{L^2}^2   .   \llabel{EQ116}   \end{align} Then \eqref{EQ122} and \eqref{EQ127} may be rewritten as   \begin{align}   \begin{split}   &\frac{d}{dt} X   +   \frac{1}{C}   Z   \les      e^{Ct}      X      +      e^{Ct}      X^{(q-1)/q}      +      e^{Ct}      Y^{\theta_2\gamma/q}      X^{(q-1)/q}   +   Y^{((s-2)/s)_{+}\gamma/q}   X^{(q-1)/q}   \end{split}    \label{EQ129}   \end{align} and   \begin{align}   \begin{split}    \frac{d}{dt}Y     \les    X^{(1-\theta_3)/q}    Z^{\theta_3/{q}}    Y^{1+\gamma/s q-\gamma/q}    +     Y^{\gamma\theta_4/q+\gamma /sq+1-\gamma/q}            + \phi(t) Y    ,   \end{split}   \label{EQ128}   \end{align} respectively. (We use here that if $(d/dt)\Vert \Lambda^{s}\rho\Vert_{L^q}\leq f$, then $\dot Y\leq f Y^{1-\gamma/q}$.) Adding \eqref{EQ129} and \eqref{EQ128}, we obtain   \begin{align}   \begin{split}   \frac{d}{dt}(X+Y)   +   \frac{1}{C}{Z}   & \les      e^{Ct}      X      +      e^{Ct}      X^{(q-1)/q}      +      e^{Ct}      Y^{\theta_2\gamma/q}      X^{(q-1)/q}   +   Y^{((s-2)/s)_{+}\gamma/q}   X^{(q-1)/q}   \\&\indeq   +    X^{(1-\theta_3)/q}    Z^{\theta_3/{q}}    Y^{1+\gamma/s q-\gamma/q}    +     Y^{\gamma\theta_4/q+\gamma /sq+1-\gamma/q}    + \phi(t)Y   .   \end{split}   \llabel{EQ117}   \end{align} In order to apply the Gronwall lemma, it is sufficient that the conditions   \begin{align}    \begin{split}    &     \frac{\theta_2 \gamma}{q}     + \frac{q-1}{q}    \le 1    \\&    \left(\frac{s-2}{s}\right)_{+}    \frac{\gamma}{q}    + \frac{q-1}{q} \le 1    \\&    \frac{1}{q}    + \frac{\gamma}{s q}     - \frac{\gamma}{q}    \le 0   \\&     \frac{\gamma \theta_4}{q}     + \frac{\gamma }{s q}     - \frac{\gamma}{q}     \le 0    \end{split}    \label{EQ130}   \end{align} hold. The first three conditions may be summarized as   \begin{equation}    \frac{s}{s-1}    \leq    \gamma    \leq    \min\left\{         \frac{1}{\theta_2},          \frac{s}{s-2}        \right\}    \llabel{EQ131}   \end{equation} if $s>2$ and as   \begin{equation}    \frac{s}{s-1}    \leq    \gamma    \leq         \frac{1}{\theta_2}    \llabel{EQ131}   \end{equation} if $s\le 2$. The last condition in \eqref{EQ130} is equivalent to   \begin{equation}    s\geq \frac{1}{1-\theta_4}    .    \llabel{EQ132}   \end{equation} Setting $s_1=q s/(q s-1)$, it is easy to verify that  we may simply take $\gamma=s/(s-1)$ as we have $s/(s-1)\leq 1/\theta_2$. The condition \eqref{EQ123}  can also be checked easily. The proof is concluded by a simple application of a Gronwall lemma. \end{proof} \par\heyu{ GvM gz d7Wr iqtF q24GYc yi f YkW Hv7 EI0 aq 5JKl fNDC NmWom3 Vy X JsN t4W P8y Gg AoAT OkVW Z4ODLt kz a 9Pa dGC GQ2 FC H6EQ ppks xFKMWA fY 0 Jda SYg o7h hG wHtt bb4z 5qrcdc 9C n Amx qY6 m8u Gf 7DZQ 6FBU PPiOxg sQ} \par\heyu{ 0 CZl PYP Ba7 5O iV6t ZOBp fYuNcb j4 V Upb TKX ZRJ f3 6EA0 LDgA dfdOpS bg 1 ynC PUV oRW xe WQMK Smuh 3JHqX1 5A P JJX 2v0 W6l m0 llC8 hlss 1NLWaN hR B Aqf Iuz kx2 sp 01oD rYsR ywFrNb z1 h Gpq 99F wUz lf cQkT} \section*{Acknowledgments}  The authors were supported in part by the NSF grants DMS-1615239 and DMS-1907992. \par\heyu{ sbCv GIIgmf Hh T rM1 ItD gCM zY ttQR jzFx XIgI7F MA p 1kl lwJ sGo dX AT2P goIp 9VonFk wZ V Qif q9C lAQ 4Y BwFR 4nCy RAg84M LJ u nx8 uKT F3F zl GEQt l32y 174wLX Zm 6 2xX 5xG oaC Hv gZFE myDI zj3q10 RZ r ssw ByA} \par\heyu{ 2Wl OA DDDQ Vin8 PTFLGm wi 6 pgR ZQ6 A5T Ll mnFV tNiJ bnUkLy vq 9 zSB P6e JJq 7P 6RFa im6K XPWaxm 6W 7 fM8 3uK D6k Nj 7vhg 4ppZ 4ObMaS aP H 0oq xAB G8v qr qT6Q iRGH BCCN1Z bl T Y4z q8l FqL Ck ghxD UuZw 7MXCD4 ps} \small \end{document}